\documentclass[12pt]{amsart}
\usepackage{amscd}
%
%
%

%

%
\def\frk{\frak}               

\def\pp{{\frk p}}

\def\qq{{\frk q}}

\def\mm{{\frk m}}

\def\Phi{{\frk n}}
\def\Phi{{\frk N}}
%
%

%
\def\opn#1#2{\def#1{\operatorname{#2}}} 
%
\opn\chara{char} \opn\length{\ell} \opn\pd{pd} \opn\rk{\lk}\opn\link{link}
\opn\projdim{proj\,dim} \opn\injdim{inj\,dim} \opn\rank{rank}
\opn\depth{depth} \opn\and{and} \opn\grade{grade}
\opn\height{height} \opn\embdim{emb\,dim} \opn\codim{codal}

\opn\Tr{Tr} \opn\bigrank{big\,rank}
\opn\superheight{superheight}\opn\lcm{lcm}
\opn\trdeg{tr\,deg}%
\opn\reg{reg} \opn\lreg{lreg} \opn\ini{in}
%
\opn\div{div} \opn\Div{Div} \opn\cl{cl} \opn\Cl{Cl}
%
%
\opn\Spec{Spec} \opn\Supp{Supp} \opn\supp{supp} \opn\Sing{Sing} \opn\Var{Var}
\opn\Ass{Ass} \opn\Min{Min}
%
%
\opn\Ann{Ann} \opn\Rad{Rad} \opn\Soc{Soc}
%
%
\opn\Im{Im} \opn \Bbbk{Bbbk} \opn\mdepth {mdepth}
 \opn\Ker{Ker} \opn\Coker{Coker} \opn\Am{Am} \opn \inf{inf}
\opn\Hom{Hom} \opn\Tor{Tor} \opn\Ext{Ext} \opn\End{End} \opn\cd{cd}
\opn\Aut{Aut} \opn\id{id} \opn\Att{Att} \opn\Assd{Assd}

\opn\nat{nat}
\opn\pff{pf}
\opn\Pf{Pf} \opn\GL{GL} \opn\SL{SL} \opn\mod{mod} \opn\ord{ord}
\opn\cl{cl} \opn\conv{conv} \opn\ext{ext} \opn\rad{rad}
\opn\star{star} \opn\red{red} \opn\H{H} \opn\bight{bight} \opn\Assh{Assh} \opn\Psupp{Psupp}
%
%
\opn\aff{aff} \opn\con{conv} \opn\relint{relint} \opn\st{st}
\opn\lk{lk} \opn\cn{cn} \opn\core{core} \opn\vol{vol}
\opn\link{link} \opn\star{star}
\opn\gr{gr}

%
%

\def\pot#1#2{#1[\kern-0.28ex[#2]\kern-0.28ex]}

%
%
\opn\dirlim{\underrightarrow{\lim}}
\opn\inivlim{\underleftarrow{\lim}}
%
%
%

\let\tensor=\otimes
\let\iso=\cong

\let\Dirsum=\bigoplus

%
%
\let\to=\rightarrow

\def\Implies{\ifmmode\Longrightarrow \else
     \unskip${}\Longrightarrow{}$\ignorespaces\fi}
\def\implies{\ifmmode\Rightarrow \else
     \unskip${}\Rightarrow{}$\ignorespaces\fi}
\def\iff{\ifmmode\Longleftrightarrow \else
     \unskip${}\Longleftrightarrow{}$\ignorespaces\fi}

\let\:=\colon
\newtheorem{Theorem}{Theorem}[section]

\newtheorem{Corollary}[Theorem]{Corollary}
\newtheorem{Proposition}[Theorem]{Proposition}
\newtheorem{Remark}[Theorem]{Remark}

\newtheorem{Example}[Theorem]{Example}

\newtheorem{Definition}[Theorem]{Definition}

\newtheorem{Fact}[Theorem]{Fact}
%
%
\let\epsilon\varepsilon
\let\phi=\varphi
\let\kappa=\varkappa
%
%
\textwidth=15cm \textheight=22cm \topmargin=0.5cm
\oddsidemargin=0.5cm \evensidemargin=0.5cm \pagestyle{plain}
%
%
\def\qed{\ifhmode\textqed\fi
   \ifmmode\ifinner\quad\qedsymbol\else\dispqed\fi\fi}
\def\textqed{\unskip\nobreak\penalty50
    \hskip2em\hbox{}\nobreak\hfil\qedsymbol
    \parfillskip=0pt \finalhyphendemerits=0}
\def\dispqed{\rlap{\qquad\qedsymbol}}

%
\opn\dis{dis}
\def\pnt{{\raise0.5mm\hbox{\large\bf.}}}

\begin{document}
\title{ Maximal depth property of finitely generated modules}

\author{ Ahad Rahimi}

\subjclass[2010]{ 13C14, 13C15, 13E05, 13D45}.
\keywords{Maximal depth, Sequentially Cohen--Macaulay, Generalized Cohen--Macaulay, Localization,  Attached primes.}

\address{ Ahad Rahimi, Department of Mathematics, Razi University, Kermanshah, Iran}\email{ahad.rahimi@razi.ac.ir}

\begin{abstract}
Let $(R,\mm)$ be a Noetherian local ring and $M$ a finitely generated $R$-module.  We say $M$ has maximal depth if there is an associated prime $\pp$ of $M$  such that $\depth M=\dim R/\pp$. In this paper,  we study finitely generated modules with maximal depth. It is shown that the maximal depth property is preserved under some important module operations. Generalized Cohen--Macaulay modules with maximal depth are classified. Finally, the attached primes of $H^{i}_{\mm}(M)$ are considered for $i<\dim M$.

\end{abstract}

\maketitle

\section*{Introduction}
Let $K$ be a field and $(R,\mm)$ be a Noetherian local ring, or a standard graded $K$-algebra with graded
maximal ideal $\mm$. Let $M$ be a finitely generated $R$-module.
A basic fact in Commutative algebra says that
\[
\depth M\leq \min\{\dim R/\pp: \pp \in \Ass(M)\}.
\]
We set $\mdepth_R M=\min\{\dim R/\pp: \pp \in \Ass(M)\}$. For simplicity, we write $\mdepth M$ instead of $\mdepth_RM$. We say $M$ has {\em maximal depth} if the equality holds, i.e., $\depth M =\mdepth M.$ In other words, there is an associated prime $\pp$ of $M$  such that $\depth M=\dim R/\pp$. Cohen--Macaulay modules as well as {\em sequentially Cohen--Macaulay} modules are examples of modules with maximal depth. Of course this class is rather large.  In Example \ref{C8} the ring $R$ is not sequentially Cohen--Macaulay and has maximal depth. This concept has already been working with several authors and some known results in this regards are as follows: If $I \subset S$ is a generic monomial ideal, then it has maximal depth, see \cite[Theorem 2.2]{MSY}. If a monomial ideal $I$ has maximal depth, then so does its polarization, see \cite{FT}.

In this paper, we study finitely generated modules to have maximal depth. Some classifications of this kind of modules are given.

 In the preliminary section, we give some facts about $\mdepth M$. For the {\em dimension filtration} $\mathcal{F}$: $0=M_0\subset M_1 \subset \dots  \subset M_d=M$ of $M$, we observe that all the $M_i$ have the same $\mdepth$, namely this number is the smallest $j$ for which $M_j\neq 0$. From this fact, we deduce that if $M$ is sequentially Cohen--Macaulay,  then $M$ has maximal depth, see Proposition \ref{mdepth}.

 In Section 2 , we show that the maximal depth property is preserved under tensor product, regular sequence and direct sum.
Let $A$ and $B$ be two Noetherian algebras over an algebraically closed field $K$. Let $M$
and $N$ be nonzero finitely generated modules over $A$ and $B$, respectively. We set  $R := A\tensor_K B$.  Then by using a result of \cite {HNTT} we show $M\tensor_KN$ as $R$-module has maximal depth  if and only if  $M$ and $N$ have maximal depth.

Next, it is shown that if $M$ has maximal depth and  $x=x_1, \dots, x_n$ is an $M$-sequence in $R$, then so does $M/xM$. The converse is not true in general, see Example~\ref{regularexample}.

 In the following section, we observe that if $M$ has maximal depth with $\depth M>0$, then $H^{\depth M}_{\mm}(M)$ is not finitely generated, see Proposition \ref{depthnotfg}. This is used to classify all {\em generalized Cohen--Macaulay}
  modules with maximal depth. In fact, we prove the following: Let $M$ be a generalized Cohen--Macaulay module with $\depth M>0$. Then $M$ has maximal depth is equivalent to say that "$M$ is sequentially Cohen--Macaulay" and this is the same as "$M$ is Cohen--Macaulay".

If $M$ is sequentially Cohen--Macaulay and $H^i_{\mm}(M)\neq 0$ for some $i$, then $H^i_{\mm}(M)$ is not finitely generated.
 Inspired by this fact and Proposition \ref{depthnotfg},  we may ask the following question in Remark \ref{not}: Assume $M$ has maximal depth and $H^i_{\mm}(M)\neq 0$. Does it follow $H^i_{\mm}(M)$ is not finitely generated?

We set $\Assd(M)=\{\pp \in \Ass(M): \depth M=\dim R/{\pp}\}$; so that $M$ has maximal depth if and only if $\Assd(M)\neq\emptyset$. In the final section, under the condition that $\pp \in \Supp(M)$ contains an element of $\Assd(M)$, we obtain several results. First of all, $M_{\pp}$ has maximal depth and $\depth M = \depth M_{\pp}+ \dim R/{\pp}$, see Proposition~\ref{localize}.
Secondly, we show
\[
\min \Att(H^{\depth M}_{\mm}(M))=\Assd(M).
\]
As a consequence, we explicitly obtain the attached primes $H^i_{\mm}(M)$ for all $i$ if $M$ is sequentially Cohen--Macaulay.
Note that the attached primes of top local cohomology are known. In fact, $\Att(H^{\dim M}_{\mm}(M))=\Assh(M)$ where $\Assh(M)=\{\pp \in \Ass(M): \dim M=\dim R/{\pp}\}$ and not so much is known about  $\Att(H^i_{\mm}(M))$ whenever $i<\dim M$.

At the end of this section we consider the following fact in \cite{TN}.
  For a finitely generated $R$-module $M$, one has $\Ann_R (M/\pp M)=\pp$  for all $\pp \in \Var(\Ann_R M)$. The dual property of this fact for an Artinian module $L$  is :
 \begin{equation*}
 \label{Psupp}
\hspace{3cm} \Ann (0 :_L \pp) =\pp  \quad \text{for all}\quad   \pp \in  \Var(\Ann L).\hspace{3cm}(*)
\end{equation*}

In \cite{TN} it is shown that the local cohomology module $H^i_{\mm}(M)$  satisfies the property $(*)$ with some conditions on $R$. Under our condition that $\pp \in \Supp(M)$ contains an element of $\Assd(M)$, we show $H^{\depth M }_{\mm}(M)$  satisfies $(*)$. As a consequence, if $M$ is sequentially Cohen--Macaulay, then $H^i_{\mm}(M)$  satisfies $(*)$ for all $i$.
\section{Preliminaries}

Let $K$ be a field and $(R,\mm)$ be a Noetherian local ring, or a standard graded $K$-algebra with graded
maximal ideal $\mm$. Let $M$ be a finitely generated $R$-module.
It is a classical fact that
\[
\depth M\leq \min\{\dim R/\pp: \pp \in \Ass(M)\},
\]
see \cite{BH}. We set $\mdepth_R M=\min\{\dim R/\pp: \pp \in \Ass(M)\}$. For simplicity, we write $\mdepth M$ instead of $\mdepth_RM$.
Thus $\depth M\leq \mdepth M\leq \dim M$. Observe that $\depth(M)=0$ if and only if $\mdepth(M)=0$. Thus, if $\mdepth M=1$, then $\depth M=1$.
\begin{Definition}{\em
We say $M$ has {\em maximal depth} if the equality holds, i.e.,
\[
\depth M =\mdepth M.
\]
In other words, there is an associated prime $\pp$ of $M$  such that $\depth M=\dim R/\pp$.}
\end{Definition}

  Cohen--Macaulay modules are obvious example of modules of maximal depth. If $\depth M=0$, then $M$ has maximal depth. In  particular, any non-zero Artinian module $N$ has maximal depth. Indeed, let $\pp \in \Ass(N)$. The exact sequence $0\to R/\pp\to N$ of $R$-modules implies that $R/\pp$ is an Artinian domain. It follows that $R/\pp$ is a filed and hence $\mm \in \Ass(N)$. Therefore $\depth N=0.$

  Notice that,  if $M$ is unmixed, then $M$ has maximal depth if and only if $M$ is Cohen--Macaulay.

This is a known fact that sequentially Cohen--Macaulay modules have maximal depth. In the following, we give some facts about $\mdepth M$ and hence deduce that sequentially Cohen--Macaulay modules have maximal depth.

Let $M$ be an $R$-module of dimension $d$.
The {\em dimension filtration}
  $\mathcal{F}$:
$0=M_0\subset M_1 \subset
 \dots  \subset M_d=M$
 of $M$ is defined by the property that $M_i$  is the largest submodule of $M$ with $\dim M_i\leq i$  for $i = 0, \dots, d$.
 In a dimension filtration each $M_i$  is uniquely determined as follows, see \cite{S}.
\begin{Fact}
\label{primary}
 Let $M$ be a finitely generated $R$-module and $\mathcal{F}$ be the dimension filtration of $M$. Then
\[
M_i = \bigcap_{\dim R/\pp_j>i}N_j
\]
for all $i=0, \dots, d$. Here $0 =\bigcap_{j=1}^nN_j$ denotes a reduced primary decomposition of $0$ in $M$.
\end{Fact}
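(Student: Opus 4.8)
The plan is to set $L_i := \bigcap_{\dim R/\pp_j > i} N_j$ and prove $L_i = M_i$ by the two inclusions $L_i \subseteq M_i$ and $M_i \subseteq L_i$, using the characterization of $M_i$ as the largest submodule of dimension at most $i$. Throughout I would rely on the standard features of a reduced primary decomposition $0 = \bigcap_{j=1}^n N_j$: namely $\Ass(M) = \{\pp_1,\dots,\pp_n\}$, each quotient $M/N_j$ is $\pp_j$-coprimary so that $\Ass(M/N_j) = \{\pp_j\}$ and $\dim(M/N_j) = \dim R/\pp_j$, and the fact that for any finitely generated module $N$ one has $\dim N = \max\{\dim R/\pp : \pp \in \Ass(N)\}$.

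First I would establish $\dim L_i \le i$, which by the maximality defining $M_i$ immediately gives $L_i \subseteq M_i$. Write $J = \{j : \dim R/\pp_j > i\}$ and put $P := \bigcap_{j \notin J} N_j$. The relation $L_i \cap P = \bigcap_{j=1}^n N_j = 0$ shows that the composite $L_i \hookrightarrow M \to M/P$ is injective, whence $\Ass(L_i) \subseteq \Ass(M/P)$. Since $M/P$ embeds into $\bigoplus_{j\notin J} M/N_j$, we obtain $\Ass(M/P) \subseteq \{\pp_j : j \notin J\} = \{\pp_j : \dim R/\pp_j \le i\}$. Therefore every $\pp \in \Ass(L_i)$ satisfies $\dim R/\pp \le i$, and the dimension formula gives $\dim L_i \le i$.

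For the reverse inclusion I would show $M_i \subseteq N_j$ for every $j \in J$, which is exactly $M_i \subseteq L_i$. Suppose this fails for some such $j$. Then the image of $M_i$ in $M/N_j$, namely $(M_i + N_j)/N_j$, is a nonzero submodule of the $\pp_j$-coprimary module $M/N_j$; its associated primes lie in $\{\pp_j\}$ and it is nonzero, so it is itself $\pp_j$-coprimary and has dimension $\dim R/\pp_j > i$. But this module is a homomorphic image of $M_i$, so its dimension is at most $\dim M_i \le i$, a contradiction. Combining the two inclusions yields $M_i = L_i$ for all $i$.

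The step I expect to demand the most care is the bound $\dim L_i \le i$. The naive embedding $M/L_i \hookrightarrow \bigoplus_{j\in J} M/N_j$ only controls $\Ass(M/L_i)$, not $\Ass(L_i)$, and so does not directly bound the dimension of $L_i$ itself. The key device is instead to intersect $L_i$ with the complementary component $P$ and use that the \emph{full} decomposition is $0$, which embeds $L_i$ into a module assembled from the low-dimensional components $M/N_j$ with $\dim R/\pp_j \le i$. Here the reducedness of the decomposition is essential, since it is what guarantees $\Ass(M) = \{\pp_1,\dots,\pp_n\}$ and makes the tracking of which $\pp_j$ survive in each intersection unambiguous.
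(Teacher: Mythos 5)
Your argument is correct and complete: the bound $\dim L_i\le i$ via the embedding $L_i\hookrightarrow M/P$ (using that the full intersection of the $N_j$ is $0$) gives $L_i\subseteq M_i$, and the coprimary argument on $(M_i+N_j)/N_j$ gives the reverse inclusion. The paper itself offers no proof of this Fact, citing only Schenzel \cite{S}, where the statement is established by essentially the same standard bookkeeping of associated primes of a reduced primary decomposition, so your write-up is a faithful self-contained version of the intended argument.
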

For all i, we set $\Ass^i(M) = \{\pp \in \Ass(M): \dim R/\pp = i\}$. The following characterization
of a dimension filtration is given in \cite{HP}.

\begin{Fact}
\label{Ass}
Let  $\mathcal{F}$ be a filtration of
$M$. The following conditions are equivalent:
\begin{itemize}
\item[{(a)}] $\Ass(M_i/M_{i-1}) = \Ass^i(M)$ for all i;
\item[{(b)}] $\mathcal{F}$ is the dimension filtration of $M$.
\end{itemize}
\end{Fact}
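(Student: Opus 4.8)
The plan is to prove both implications by computing the associated primes of the filtration factors $M_i/M_{i-1}$, using the explicit description of the dimension filtration recorded in Fact~\ref{primary}. Fix a reduced primary decomposition $0=\bigcap_{j=1}^n N_j$ of $0$ in $M$, with $N_j$ a $\pp_j$-primary submodule and the $\pp_j$ pairwise distinct. For (b) $\Rightarrow$ (a), Fact~\ref{primary} gives $M_i=\bigcap_{\dim R/\pp_j>i}N_j$, hence $M_{i-1}=M_i\cap\bigcap_{\dim R/\pp_j=i}N_j$, and the second isomorphism theorem produces embeddings
\[
M_i/M_{i-1}\hookrightarrow M\Big/\bigcap_{\dim R/\pp_j=i}N_j\hookrightarrow\bigoplus_{\dim R/\pp_j=i}M/N_j .
\]
Since $\Ass(M/N_j)=\{\pp_j\}$, this immediately yields $\Ass(M_i/M_{i-1})\subseteq\Ass^i(M)$. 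For the reverse inclusion I would first establish the auxiliary identity $\Ass(M_i)=\{\pp\in\Ass(M):\dim R/\pp\le i\}$: the inclusion ``$\subseteq$'' holds because $\dim M_i\le i$ forces $\dim R/\pp\le i$ for every $\pp\in\Ass(M_i)\subseteq\Ass(M)$, while for ``$\supseteq$'' any such $\pp=\Ann(x)$ yields a copy $Rx\cong R/\pp$ of dimension $\le i$ lying in $M_i$ by the maximality defining the dimension filtration, so $\pp\in\Ass(M_i)$. Given $\pp_j$ with $\dim R/\pp_j=i$, this identity shows $\pp_j\in\Ass(M_i)\setminus\Ass(M_{i-1})$, and the exact sequence $0\to M_{i-1}\to M_i\to M_i/M_{i-1}\to 0$ then forces $\pp_j\in\Ass(M_i/M_{i-1})$.

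For (a) $\Rightarrow$ (b) I would assume $\Ass(M_i/M_{i-1})=\Ass^i(M)$ for all $i$ and verify directly that each $M_i$ is the largest submodule of $M$ of dimension $\le i$. Using $\dim N=\max\{\dim R/\pp:\pp\in\Ass(N)\}$ for $N\ne 0$, the hypothesis gives $\dim(M_k/M_{k-1})=k$ whenever this factor is nonzero, so the short exact sequences $0\to M_{k-1}\to M_k\to M_k/M_{k-1}\to 0$ and an easy induction give $\dim M_i\le i$. For maximality, the induced filtration of $M/M_i$ shows $\Ass(M/M_i)\subseteq\bigcup_{k>i}\Ass^k(M)=\{\pp\in\Ass(M):\dim R/\pp>i\}$. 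If $N\subseteq M$ has $\dim N\le i$, then $(N+M_i)/M_i\cong N/(N\cap M_i)$ is a quotient of $N$, so of dimension $\le i$, and embeds in $M/M_i$; were it nonzero it would contribute to $\Ass(M/M_i)$ an associated prime of dimension $\le i$, a contradiction. Hence $N\subseteq M_i$, which is exactly the defining property of the dimension filtration.

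The step I expect to be the main obstacle is the reverse inclusion $\Ass^i(M)\subseteq\Ass(M_i/M_{i-1})$ in (b) $\Rightarrow$ (a): the embedding hands over only one containment cheaply, and the other forces me through the auxiliary computation of $\Ass(M_i)$, which itself rests both on the maximality in the definition of the dimension filtration (to place $Rx$ inside $M_i$) and on the irredundancy of the chosen primary decomposition. In the standard graded setting I would run the entire argument with graded submodules, graded primary decompositions and homogeneous associated primes; every step above then carries over unchanged.
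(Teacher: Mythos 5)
The paper offers no proof of this Fact --- it is quoted from \cite{HP} --- so there is nothing internal to compare against; judged on its own, your argument is correct and complete. For (b)$\Rightarrow$(a) you get the inclusion $\Ass(M_i/M_{i-1})\subseteq\Ass^i(M)$ from Fact~\ref{primary} via $M_{i-1}=M_i\cap\bigcap_{\dim R/\pp_j=i}N_j$ and the embedding into $\bigoplus_{\dim R/\pp_j=i}M/N_j$, and the reverse inclusion from the auxiliary identity $\Ass(M_i)=\{\pp\in\Ass(M):\dim R/\pp\le i\}$ together with $\Ass(M_i)\subseteq\Ass(M_{i-1})\cup\Ass(M_i/M_{i-1})$; both halves are sound, and your justification of the auxiliary identity correctly uses that the largest submodule of dimension $\le i$ contains \emph{every} submodule of dimension $\le i$ (since a sum of two such submodules again has dimension $\le i$). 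The converse direction, bounding $\dim M_i$ by induction on the factors and then showing any $N$ with $\dim N\le i$ satisfies $N\subseteq M_i$ because $(N+M_i)/M_i$ would otherwise contribute an associated prime of dimension $\le i$ to $\Ass(M/M_i)\subseteq\bigcup_{k>i}\Ass^k(M)$, is also correct. The only loose end is an indexing ambiguity inherited from the paper itself: the paper writes $M_0=0$, which silently presumes $\Ass^0(M)=\emptyset$ (i.e.\ $\mm\notin\Ass(M)$); your argument is indifferent to this convention provided one reads $M_{-1}=0$ and lets $i$ range over all degrees.
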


A finite filtration $\mathcal{F}$:
$0=M_0\subset M_1 \subset
 \dots  \subset M_r=M$
 of $M$ by
submodules is a {\em Cohen--Macaulay filtration} if
each quotient $M_i/M_{i-1}$ is Cohen--Macaulay and $0 \leq \dim M_1/M_0<\dim M_2/M_1< \dots< \dim M_r/M_{r-1}$.
 If $M$ admits a Cohen--Macaulay filtration, then we say $M$ is
{\em sequentially Cohen--Macaulay}. Note that if $M$ is sequentially Cohen--Macaulay, then the Cohen--Macaulay filtration is just the dimension filtration.

\begin{Proposition}
\label{mdepth}
Let  $\mathcal{F}$: $0=M_0\subset M_1 \subset \dots  \subset M_d=M$ be the dimension filtration of $M$. We set $t=\min\{i: M_i\neq 0 \}$. Then
 $\mdepth M_i=t$ for $i=t, \dots, d$.  Moreover, if $M$ is sequentially Cohen--Macaulay, then $M$ has maximal depth.
\end{Proposition}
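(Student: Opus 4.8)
The plan is to first pin down the integer $t$ in terms of associated primes, then read off $\mdepth M_i$ from inclusions of sets of associated primes, and finally run a depth induction along the filtration for the sequentially Cohen--Macaulay statement. To begin I would show that $\Ass^s(M)=\emptyset$ for every $s<t$, while $\Ass^t(M)=\Ass(M_t)\neq\emptyset$. For the first claim, if some $\pp\in\Ass^s(M)$ with $s<t$ existed, the embedding $R/\pp\hookrightarrow M$ would produce a nonzero submodule of dimension $s$, which is then contained in the largest submodule $M_s$ of dimension $\le s$; this forces $M_s\neq 0$, contradicting the minimality of $t$. Thus no associated prime of $M$ has dimension $<t$. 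For the second claim, $M_{t-1}=0$ gives $M_t=M_t/M_{t-1}$, so Fact~\ref{Ass} yields $\Ass(M_t)=\Ass(M_t/M_{t-1})=\Ass^t(M)$; since $M_t\neq 0$ this set is nonempty and consists of primes $\pp$ with $\dim R/\pp=t$.

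Next, for each $i$ with $t\le i\le d$ I would prove the two inequalities. As $M_i$ is a submodule of $M$ we have $\Ass(M_i)\subseteq\Ass(M)$, and by the previous step every $\pp\in\Ass(M)$ satisfies $\dim R/\pp\ge t$; hence $\mdepth M_i\ge t$. Conversely $M_t\subseteq M_i$, so $\Ass^t(M)=\Ass(M_t)\subseteq\Ass(M_i)$, and these primes have $\dim R/\pp=t$, giving $\mdepth M_i\le t$. Together these prove $\mdepth M_i=t$ for $i=t,\dots,d$; in particular $\mdepth M=\mdepth M_d=t$.

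For the final assertion, assume $M$ is sequentially Cohen--Macaulay, so that in its dimension filtration every nonzero quotient $M_i/M_{i-1}$ is Cohen--Macaulay; when $M_i/M_{i-1}\neq 0$ it has $\Ass(M_i/M_{i-1})=\Ass^i(M)$, all of dimension $i$, hence is Cohen--Macaulay of depth $i\ge t$. Since $\depth M\le\mdepth M=t$ always holds, it suffices to show $\depth M\ge t$, which I would obtain by proving $\depth M_i\ge t$ for all $i\ge t$ by induction. The base case $M_t$ is Cohen--Macaulay of depth $t$. For the step, apply the Depth Lemma \cite{BH} to $0\to M_{i-1}\to M_i\to M_i/M_{i-1}\to 0$ to get $\depth M_i\ge\min\{\depth M_{i-1},\depth(M_i/M_{i-1})\}$: if the quotient is zero this reads $\depth M_i\ge\depth M_{i-1}\ge t$, and otherwise the quotient has depth $i\ge t$, so again $\depth M_i\ge t$. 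Taking $i=d$ gives $\depth M\ge t$, whence $\depth M=\mdepth M$ and $M$ has maximal depth.

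The arguments are largely routine; the point needing most care is the bookkeeping in the induction, namely tracking which quotients $M_i/M_{i-1}$ vanish and using the convention $\depth 0=\infty$ so that the Depth Lemma carries the bound $\ge t$ through the trivial steps. The other delicate spot is deducing $\Ass^t(M)=\Ass(M_t)\neq\emptyset$ from Fact~\ref{Ass}, where one must invoke the vanishing of $M_{t-1}$.
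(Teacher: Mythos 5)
Your proof is correct. For the first assertion it is essentially the paper's argument in slightly rearranged form: the paper derives $\Ass^s(M)=\emptyset$ for $s<t$ from Fact~\ref{Ass} (since $M_s=0$ forces $\Ass(M_s/M_{s-1})=\emptyset$), whereas you argue directly via the embedding $R/\pp\hookrightarrow M$ landing inside $M_s$; and the paper's chain $t=\mdepth M\leq\mdepth M_{d-1}\leq\dots\leq\mdepth M_t\leq\dim M_t=t$ is exactly your pair of inclusions $\Ass(M_t)\subseteq\Ass(M_i)\subseteq\Ass(M)$ read off termwise. The genuine divergence is in the sequentially Cohen--Macaulay step: the paper invokes the cited fact that $\depth M_i=\depth M$ for all $i$ when $M$ is sequentially Cohen--Macaulay (\cite[Fact 2.3]{R1}) and concludes $\depth M=\depth M_t=\mdepth M_t$, while you re-derive the needed inequality $\depth M\geq t$ by an induction along the filtration using the Depth Lemma on $0\to M_{i-1}\to M_i\to M_i/M_{i-1}\to 0$, with each nonzero quotient Cohen--Macaulay of depth $i\geq t$. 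Your route is self-contained and avoids the external citation at the cost of a few more lines; the paper's is shorter and yields the slightly stronger statement that all the $M_i$ share the common depth $t$. Both are sound, and your care about vanishing quotients (with the convention $\depth 0=\infty$) is exactly the bookkeeping the induction needs.
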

\begin{proof}
 We first show that $\mdepth M=t$. By Fact \ref{Ass},  $\Ass(M_t)=\Ass^t(M)$. Hence  $\dim M_t=t$; so there exists $\pp \in \Ass(M_t)$ such that $\dim R/\pp=t$. Suppose $\mdepth M=s$. As $\pp \in \Ass(M)$, it follows that $s\leq t$. If $s<t$, then $M_s=0$. Hence $\Ass^s(M)=\emptyset$ by Fact \ref{Ass},  which is a contradiction. Therefore, $\mdepth M=t$.
Now observe that
\[
t=\mdepth M\leq \mdepth M_{d-1}\leq \dots \leq \mdepth M_t\leq \dim M_t=t.
\]
Thus, $\mdepth M_i=t$ for $i=t, \dots, d$.

To show the second part, let $M$ be sequentially Cohen--Macaulay. Thus the dimension filtration of  $\mathcal{F}$ is the Cohen--Macaulay filtration. As $M_t$ is Cohen--Macaulay, it has maximal depth and so $\depth M_t=\mdepth M_t$. Since $M$ is sequentially Cohen--Macaulay, it follows that $\depth M_i=\depth M$ for all $i$, see for instance \cite[Fact 2.3]{R1}. Thus $\depth M=\depth M_t=\mdepth M_t=\mdepth M$, as desired.
\end{proof}

\begin{Remark}
\label{ass1}{\em In Proposition \ref{mdepth}, we observe that
\[
\mdepth M=\min\{ \dim R/\pp: \pp \in \Ass(M_t)\}=\mdepth M_t.
\]
In a dimension filtration  $\mathcal{F}$, one has $\Ass(M_t)=\Ass(M)-\Ass(M/M_t)$, see \cite{S}.
}
\end{Remark}
In the following, we give an example which is not sequentially Cohen--Macaulay and has maximal depth. Moreover, in the dimension filtration not necessarily all the $M_i$ have the same depth.

\begin{Example}
\label{C8}
{\em Let $S=K[x_1, \dots,  x_8]$ be the standard graded polynomial ring  with the unique graded maximal ideal $\mm=(x_1, \dots, x_8)$.
Let $C_8$ be the cycle graph of length $8$ and $I$ its edge ideal in $S$. Thus,
\[
I=(x_1x_2, x_2x_3, x_3x_4, x_4x_5, x_5x_6, x_6x_7, x_7x_8, x_8x_1).
\]
By using, CoCoA \cite{CO}, the ideal $I$ has the minimal primary decomposition $I=\bigcap_{i=1}^{10}\pp_i$ where
 $\pp_1=(x_1, x_3, x_5, x_7)$, $\pp_2=(x_2, x_4, x_6, x_8)$, $\pp_3=(x_2, x_3, x_5, x_7, x_8)$, $\pp_4=(x_2, x_3, x_5, x_6, x_8)$, $\pp_5=(x_1, x_3, x_5, x_6, x_8)$, $\pp_6=(x_1, x_3, x_4, x_6, x_8)$, \;$\pp_7=(x_1, x_2, x_4, x_5, x_7)$, $\pp_8=(x_1, x_2, x_4, x_6, x_7)$, $\pp_9=(x_1, x_3, x_4, x_6, x_7)$ and \;\; $\pp_{10}=(x_2, x_4, x_5, x_7, x_8).$
   We set $R=S/I$.
    Fact \ref{primary} provides the dimension filtration
\[
0=R_0\subset R_1\subset R_2\subset R_3  \subset R_4=R,
 \]
for $R$ where $R_3=(\pp_1\cap \pp_2)/I$ and $R_1=R_2=0$.   By Remark \ref{ass1},
\[
\Ass(R_3)=\Ass(R)-\Ass(R/R_3)=\{\pp_3, \dots, \pp_{10}\}.
\]
Hence,  $\mdepth R_3=\mdepth R=3$. By \cite[Corollary 7.6.30]{JS}  we have  $\depth R=3$(or using CoCoA \cite{CO}). Therefore, $R$ has maximal depth.   We claim that $\depth R_3=2$.
Consider the exact sequence $0\to R_3\to R\to R' \to 0$ where $R'=R/R_3 \iso S/(\pp_1\cap \pp_2)$.  One has $\depth R'=1$. Hence the Depth lemma yields $\depth R_3=2$. So in the dimension filtration $R_3$ and $R_4$ have different depth. The ring $R$ is not sequentially Cohen--Macaulay, because in the dimension filtration of $R$ the quotients $R_3/R_2$ and $R'=R/R_3$ are not Cohen--Macaulay. Indeed, $2=\depth R_3 \neq \dim R_3=3$, and $1=\depth R' \neq \dim R'=4$.  Note also that the only sequentially Cohen--Macaulay cycles are $C_3$ and $C_5$, see \cite{FV}.
}
\end{Example}

\begin{Remark}{\em It is natural to define the following notion:
A dimension filtration $\mathcal{F}$:
$0=M_0\subset M_1 \subset
 \dots  \subset M_r=M$
 of $M$ by
submodules is a {\em maximal depth filtration} if
each quotient $M_i/M_{i-1}$ has maximal depth and $0 \leq \dim M_1/M_0<\dim M_2/M_1< \dots< \dim M_r/M_{r-1}$.
 If $M$ admits a maximal depth filtration, then we say $M$ has {\em sequentially maximal depth}. We notice that $M$ has sequentially maximal depth is equivalent to say that $M$ is sequentially Cohen--Macaulay. In fact, by Fact \ref{Ass} we have $\Ass(M_i/M_{i-1})=\{ \pp \in \Ass(M): \dim R/\pp=i\}$. Thus $M_i/M_{i-1}$ is unmixed. Consequently, $M_i/M_{i-1}$ has maximal depth is equivalent to say that $M_i/M_{i-1}$ is Cohen--Macaulay. }
\end{Remark}

\section{Tensor product, regular sequence and direct sum}
In this section, we show that the maximal depth property is preserved under some important module
operations.

Let $A$ and $B$ be two Noetherian algebras over a field $K$ such that $R := A\tensor_K B$
is Noetherian. The following fact gives the associated primes of $R$-modules of the form $M\tensor_K N$, where $M$
and $N$ are nonzero finitely generated modules over $A$ and $B$, respectively.
\begin{Fact}	
\label{ass}{\em Let $K$ be an algebraically closed field.  Then
\[
\Ass_R(M\tensor_KN)=\{ \pp_1+\pp_2: \pp_1 \in \Ass_{A}(M) \quad\text{and} \quad \pp_2\in \Ass_{B}(N)\},
\]
see, \cite[Corollary 2.8]{HNTT}.}
\end{Fact}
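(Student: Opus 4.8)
The plan is to prove the two inclusions separately, the essential input being the classical fact that over an algebraically closed field $K$ the tensor product of two $K$-domains is again a domain. Applied to $A/\pp_1$ and $B/\pp_2$, this shows $R/(\pp_1+\pp_2)\iso (A/\pp_1)\otimes_K(B/\pp_2)$ is a domain, so that $\pp_1+\pp_2$ is a prime of $R$ in the first place; this is what makes the right-hand set consist of genuine primes.

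For the inclusion $\supseteq$, I would start from embeddings $A/\pp_1\hookrightarrow M$ and $B/\pp_2\hookrightarrow N$ provided by $\pp_1\in\Ass_A M$ and $\pp_2\in\Ass_B N$. Since $K$ is a field, $-\otimes_K-$ is exact, so tensoring these two monomorphisms yields an $R$-linear embedding $R/(\pp_1+\pp_2)\iso(A/\pp_1)\otimes_K(B/\pp_2)\hookrightarrow M\otimes_K N$. As the source is $R$ modulo a prime, the image of $1$ is an element of $M\otimes_K N$ whose annihilator is exactly $\pp_1+\pp_2$, giving $\pp_1+\pp_2\in\Ass_R(M\otimes_K N)$.

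For the reverse inclusion, fix $\Pp\in\Ass_R(M\otimes_K N)$ with a witness $z$ satisfying $\Ann_R z=\Pp$, and set $\pp_1=\Pp\cap A$ and $\pp_2=\Pp\cap B$. The first step is to recognize these contractions as associated primes: choosing a $K$-basis $\Lambda$ of $N$ gives an isomorphism $M\otimes_K N\iso M^{(\Lambda)}$ of $A$-modules, whence $\Ass_A(M\otimes_K N)=\Ass_A M$; since $\pp_1=\Ann_A z$ is prime, we conclude $\pp_1\in\Ass_A M$, and symmetrically $\pp_2\in\Ass_B N$. The containment $\pp_1+\pp_2\subseteq\Pp$ is then immediate.

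The hard part is the opposite containment $\Pp\subseteq\pp_1+\pp_2$, and I expect this to be the main obstacle; note that it must use that $\Pp$ is associated and not merely prime, since for instance $(t-s)\subset K[t,s]=K[t]\otimes_K K[s]$ contracts to $(0)$ in both factors without being $(0)$. I would pass to the domain $\bar R=R/(\pp_1+\pp_2)=(A/\pp_1)\otimes_K(B/\pp_2)$ and replace $M,N$ by the finitely generated faithful modules $M_0=\sum_i(A/\pp_1)m_i$ and $N_0=\sum_i(B/\pp_2)n_i$ coming from a representation $z=\sum_i m_i\otimes n_i$ with the $m_i$, and the $n_i$, chosen $K$-linearly independent; then $\bar\Pp=\Pp/(\pp_1+\pp_2)\in\Ass_{\bar R}(M_0\otimes_K N_0)$, and $\bar\Pp$ meets neither $A/\pp_1$ nor $B/\pp_2$ in a nonzero element. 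Localizing at the multiplicative set $S$ generated by the nonzero elements of $A/\pp_1$ and of $B/\pp_2$ reduces everything to the domain $T=\operatorname{Frac}(A/\pp_1)\otimes_K\operatorname{Frac}(B/\pp_2)$: one gets $S^{-1}(M_0\otimes_K N_0)\iso V\otimes_K W$ with $V,W$ nonzero finite-dimensional vector spaces over the two fraction fields, hence a nonzero free $T$-module, whose only associated prime is $(0)$. Since localization preserves associated primes and $\bar\Pp\cap S=\emptyset$, we obtain $S^{-1}\bar\Pp=(0)$, and because $\bar R\hookrightarrow T$ this forces $\bar\Pp=0$, i.e. $\Pp=\pp_1+\pp_2$. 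The crux is precisely this localization-and-freeness step, where algebraic closedness (making $T$ a domain) and the associatedness of $\Pp$ are both used essentially.
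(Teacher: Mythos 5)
The paper does not prove this statement at all: it is recorded as a Fact with a pointer to \cite[Corollary 2.8]{HNTT}, so there is no in-paper argument to compare against. Your proof is correct and self-contained, and it takes a different (and more direct) route than the cited source, which obtains the algebraically closed case by specializing a description of $\Ass_R(M\tensor_KN)$ valid over an arbitrary field; you instead exploit algebraic closedness from the outset through the single classical input that a tensor product of $K$-domains is a domain, which simultaneously makes the right-hand set consist of primes, handles the inclusion $\supseteq$ via exactness of $\tensor_K$, and makes $T=\operatorname{Frac}(A/\pp_1)\tensor_K\operatorname{Frac}(B/\pp_2)$ a domain for the hard inclusion. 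The one step you should spell out is why $M_0$ is a module over $A/\pp_1$ and is faithful: writing $az=\sum_i(am_i)\tensor n_i$ and using the $K$-linear independence of the $n_i$, one gets $\Ann_A\bigl(\sum_i Am_i\bigr)=\Ann_A z=\pp_1$, which is exactly what is needed for $M_0$ to be a finitely generated faithful $A/\pp_1$-module (and hence for $V=M_0\tensor_{A/\pp_1}\operatorname{Frac}(A/\pp_1)$ to be nonzero). With that inserted, the localization argument closes correctly: $S\sect\bar\Pp=\emptyset$ because $\bar\Pp$ is prime and meets neither factor nontrivially, so $z/1\neq 0$ in the free $T$-module $V\tensor_KW$, forcing $S^{-1}\bar\Pp=0$ and hence $\bar\Pp=0$. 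A small bonus of your argument is that it never uses that $M$ and $N$ are finitely generated, nor that $R$ is Noetherian.
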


\begin{Proposition}
\label{joint}
Continue with the notation and assumptions as above,  $M\tensor_KN$ has maximal depth  if and only if  $M$ and $N$ have maximal depth.
\end{Proposition}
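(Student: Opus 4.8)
The plan is to reduce the biconditional to the additivity of both $\depth$ and $\mdepth$ under the operation $\tensor_K$, and then to exploit the termwise inequality $\depth\leq\mdepth$. First I would compute $\mdepth_R(M\tensor_K N)$ from Fact \ref{ass}. That fact identifies $\Ass_R(M\tensor_K N)$ with the set of primes $\pp_1+\pp_2$, where $\pp_1$ ranges over $\Ass_A(M)$ and $\pp_2$ over $\Ass_B(N)$. Since there is a natural isomorphism $R/(\pp_1+\pp_2)\iso (A/\pp_1)\tensor_K(B/\pp_2)$, the key dimension input I would invoke is
\[
\dim R/(\pp_1+\pp_2)=\dim A/\pp_1+\dim B/\pp_2,
\]
which holds because $K$ is algebraically closed, so that $\pp_1+\pp_2$ is again prime and dimension is additive for tensor products of affine $K$-algebras.

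Feeding this identity into the definition $\mdepth_R(M\tensor_K N)=\min\{\dim R/\pp:\pp\in\Ass_R(M\tensor_K N)\}$, and using that the two families of primes vary independently so that the minimum of the sum splits as the sum of the minima, I would obtain
\[
\mdepth_R(M\tensor_K N)=\mdepth_A M+\mdepth_B N.
\]
The parallel ingredient on the depth side is the Künneth-type additivity formula $\depth_R(M\tensor_K N)=\depth_A M+\depth_B N$, which I would quote from \cite{HNTT}. With both additivity statements in hand, the equality characterizing maximal depth for $M\tensor_K N$ becomes
\[
\depth_A M+\depth_B N=\mdepth_A M+\mdepth_B N.
\]

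To finish, I would invoke the basic inequalities $\depth_A M\leq\mdepth_A M$ and $\depth_B N\leq\mdepth_B N$ recorded in the Preliminaries. Because these hold termwise, the two sides of the last display agree if and only if each summand agrees separately, i.e.\ if and only if $\depth_A M=\mdepth_A M$ and $\depth_B N=\mdepth_B N$; equivalently, if and only if both $M$ and $N$ have maximal depth. The only substantive obstacle is securing the two additivity formulas: the dimension formula rests on $K$ being algebraically closed (ensuring $\pp_1+\pp_2$ is prime with the expected dimension), while the depth formula is the genuine external input, for which I rely on \cite{HNTT}; once both are available, the argument is a short termwise comparison.
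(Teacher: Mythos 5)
Your proposal is correct and follows essentially the same route as the paper: both rest on Fact \ref{ass}, the dimension formula $\dim R/(\pp_1+\pp_2)=\dim A/\pp_1+\dim B/\pp_2$, the depth additivity $\depth(M\tensor_KN)=\depth M+\depth N$, and a termwise comparison of the inequalities $\depth\leq\mdepth$. Your repackaging of the argument as the additivity $\mdepth_R(M\tensor_KN)=\mdepth_AM+\mdepth_BN$ is a slightly tidier way to get both implications at once, but it is the same proof in substance.
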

\begin{proof}
We set $L=M\tensor_KN$ and suppose $L$ has maximal depth. Thus there exists $\pp \in \Ass(L)$ such that $\depth L=\dim R/\pp$. This is a known fact that $\depth L=\depth M+\depth N$.  Fact \ref{ass}  provides $\pp=\pp_1+\pp_2$ where  $\pp_1 \in \Ass_{A}(M)$ and $\pp_2\in \Ass_{B}(N)$. Consequently,
\[
\depth M+\depth N=\dim R/(\pp_1+\pp_2)=\dim A/\pp_1+\dim B/\pp_2.
\]
Since  $\depth M\leq \dim A/\pp_1$ and $\depth N\leq \dim B/\pp_2$, it follows that  $\depth M= \dim A/\pp_1$ and $\depth N= \dim B/\pp_2$, and hence $M$ and $N$ have maximal depth. The converse is proved in the same way with using again Fact \ref{ass}.
\end{proof}

Next, we have the following
\begin{Proposition}
\label{regular}
Let $M$ be an $R$-module which has maximal depth. Let $x\in R$ be an $M$-regular element. Then $M/xM$ has maximal depth.
\end{Proposition}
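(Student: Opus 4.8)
The plan is to produce, from an associated prime of $M$ that realizes $\depth M$, an associated prime of $M/xM$ that realizes $\depth(M/xM)$. Since $x$ is $M$-regular we have $\depth(M/xM)=\depth M-1$, and the classical inequality gives $\depth(M/xM)\le \mdepth(M/xM)$; the whole content is therefore to exhibit a single $\qq\in\Ass(M/xM)$ with $\dim R/\qq=\depth M-1$. We may assume $x\in\mm$, since otherwise $x$ is a unit and $M/xM=0$.

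First I would choose $\pp\in\Ass(M)$ with $\depth M=\dim R/\pp$, which exists because $M$ has maximal depth. As $x$ is a nonzerodivisor on $M$ it avoids every associated prime of $M$, so $x\notin\pp$; hence the image of $x$ is a nonzero nonunit of the domain $R/\pp$, and Krull's principal ideal theorem yields $\dim R/(\pp+(x))=\dim R/\pp-1=\depth M-1$.

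Next I would pick $m\in M$ with $\Ann m=\pp$ and arrange that $m\notin xM$. This is the one technical point: an arbitrary such $m$ might lie in $xM$, killing its image in $M/xM$. By the Krull intersection theorem $\bigcap_{n\ge 0}x^nM=0$, so there is a largest $n$ with $m\in x^nM$; writing $m=x^n m'$ one checks, using that $x$ (hence $x^n$) is $M$-regular, that $\Ann m'=\pp$ and $m'\notin xM$. Replacing $m$ by $m'$, the cyclic map $R\to M/xM$, $r\mapsto rm+xM$, has kernel $\aa:=(xM:_{R} m)$ and induces an embedding $R/\aa\hookrightarrow M/xM$; thus $\Ass(R/\aa)\subseteq\Ass(M/xM)$. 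Since $m\notin xM$ the ideal $\aa$ is proper, and it visibly contains $\pp+(x)$.

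Finally I would conclude by a dimension squeeze. On one hand every $\qq\in\Ass(R/\aa)\subseteq\Ass(M/xM)$ satisfies $\dim R/\qq\le\dim R/\aa\le\dim R/(\pp+(x))=\depth M-1$. On the other hand, applying the basic inequality to $M/xM$, any associated prime $\qq$ of $M/xM$ has $\dim R/\qq\ge\mdepth(M/xM)\ge\depth(M/xM)=\depth M-1$. Combining the two forces $\dim R/\qq=\depth M-1=\depth(M/xM)$ for such a $\qq$, so $M/xM$ has maximal depth. The only genuine obstacle is the arrangement $m\notin xM$; everything else is the standard interplay of depth, dimension, and associated primes under cutting by a regular element.
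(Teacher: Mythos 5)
Your proof is correct and follows essentially the same route as the paper's: both produce a prime $\qq\in\Ass(M/xM)$ containing $\pp+(x)$, hence strictly containing $\pp$, and then squeeze $\depth M-1=\depth(M/xM)\le\dim R/\qq\le\dim R/\pp-1$. The only difference is the device used to exhibit a nonzero class of $M/xM$ annihilated by $\pp$: you use the Krull intersection theorem to replace $m$ by an element $m'\notin xM$ with $\Ann m'=\pp$ and then pass to $\Ass(R/(xM:_R m'))$, whereas the paper invokes the argument of \cite[Proposition 1.2.13]{BH} (choosing $z$ with $Rz$ maximal among the cyclic submodules of $M$ annihilated by $\pp$) together with prime avoidance; both devices are valid and serve the identical purpose.
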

\begin{proof}
By our assumption, there exists $\pp\in \Ass(M)$ such that $\depth M=\dim R/\pp$. We may assume $\depth M>1$. For $\pp \in \Ass(M)$ we choose $z\in M$ such that $Rz$ is maximal among the cyclic submodules of $M$ annihilated by $\pp$.
As in the proof of \cite[Proposition 1.2.13]{BH} we see that $\pp$ consists of zero divisors of $M/xM$.
Thus $\pp \subseteq \qq$ for some $\qq \in \Ass(M/xM)$. Note that $x \in \qq$. Since $x$ is $M$-regular, it follows that $x \not \in \pp$. Hence $\pp \neq\qq$.  Observe that
\begin{eqnarray*}
\depth M-1 & = & \depth M/xM \\
          & \leq & \dim R/{\qq} \\
          &<&  \dim R/{\pp}\\
          & =&  \depth M.
\end{eqnarray*}
Consequently,
\[
\depth M/xM = \dim R/{\qq}  \quad\text{where}\quad \qq \in \Ass(M/xM).
\]
 Therefore, $M/xM$ has maximal depth,
as desired.
\end{proof}

\begin{Corollary}
Let $M$ be an $R$-module which has maximal depth. Let $x=x_1, \dots, x_n\in R$ be an $M$-sequence. Then $M/xM$ has maximal depth.
\end{Corollary}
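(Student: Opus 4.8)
The plan is to reduce everything to the single-element case already established in Proposition~\ref{regular} and then iterate, so the argument is a short induction on the length $n$ of the sequence.

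First I would set up the chain of successive quotients. Write $M_0 = M$ and $M_i = M/(x_1,\dots,x_i)M$ for $i=1,\dots,n$, so that $M_n = M/xM$. The key structural observation, which is just the definition of an $M$-sequence unwound one step at a time, is that for each $i$ the element $x_i$ is a nonzerodivisor on $M_{i-1}$ — that is, $x_i$ is $M_{i-1}$-regular — and that $M_i = M_{i-1}/x_i M_{i-1}$, the latter via the usual isomorphism $[M/(x_1,\dots,x_{i-1})M]\big/x_i[M/(x_1,\dots,x_{i-1})M]\iso M/(x_1,\dots,x_i)M$.

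With this in place the induction is immediate. The base case is $M_0 = M$, which has maximal depth by hypothesis. For the inductive step, assuming $M_{i-1}$ has maximal depth, I apply Proposition~\ref{regular} to the module $M_{i-1}$ and the $M_{i-1}$-regular element $x_i$ to conclude that $M_i = M_{i-1}/x_i M_{i-1}$ has maximal depth. Iterating from $i=1$ to $i=n$ yields that $M_n = M/xM$ has maximal depth, as desired.

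There is essentially no real obstacle here: all the substantive content lives in Proposition~\ref{regular}, and the corollary only requires the bookkeeping of the regular-sequence definition. The one point deserving a moment's care is verifying that each tail element $x_i$ remains regular on the correct quotient $M_{i-1}$ and that the maximal depth property, once established for $M_{i-1}$, is exactly the hypothesis needed to invoke Proposition~\ref{regular} at the next step — but this is precisely what the definition of an $M$-sequence guarantees, so the reduction goes through cleanly.
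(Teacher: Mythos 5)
Your induction on the length of the sequence, reducing each step to Proposition~\ref{regular} via the isomorphism $M_{i-1}/x_iM_{i-1}\iso M/(x_1,\dots,x_i)M$, is correct and is exactly the argument the paper intends (the paper leaves the proof to the reader as an immediate consequence of the Proposition). No gaps.
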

The following example shows that the converse of Proposition \ref{regular} is not true in general.
\begin{Example}
\label{regularexample}{\em
Let $S=K[x_1,x_2,x_3, x_4]$ be the standard graded polynomial
ring
 with the unique graded maximal ideal $\mm=(x_1, x_2, x_3, x_4)$. We set  $R=S/I$ where $I=\pp_1\cap\pp_2$
 with $\pp_1=(x_1,x_2)$ and $\pp_2=(x_3, x_4)$. The element $x=x_1+x_3\in S$ is $R$-regular. Consider the following isomorphism
 \[
 R/xR\iso S/(I, x).
 \]
 Using Macaulay2 (\cite{GSE}) gives us the associated primes of $T=S/(I, x)$ that is
 \[
 \{ (x_1, x_3), (x_2, x_4, x_1+x_3)\}.
 \]
One has that $\depth T=\mdepth T=1$. Hence $R/xR$ has maximal depth. On the other hand, $\depth R=1$ and $\mdepth R=2$. Thus, $R$ has no maximal depth. }
\end{Example}

\begin{Proposition}
Let $M_1, \dots, M_n$ be finitely generated $R$-modules.  Then  $\Dirsum_{i=1}^n M_i$ has maximal depth if and only if there exists $j$ such that $\depth M_j\leq \depth M_k$ for all $k$ and $M_j$ has maximal depth.
\end{Proposition}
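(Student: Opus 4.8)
The plan is to reduce everything to two standard identities for direct sums and then to compare minima. First I would record the two facts
\[
\depth\Bigl(\Dirsum_{i=1}^n M_i\Bigr)=\min_i\depth M_i,\qquad \Ass\Bigl(\Dirsum_{i=1}^n M_i\Bigr)=\bigcup_{i=1}^n\Ass(M_i).
\]
The second of these immediately gives
\[
\mdepth\Bigl(\Dirsum_{i=1}^n M_i\Bigr)=\min\{\dim R/\pp:\pp\in\textstyle\bigcup_i\Ass(M_i)\}=\min_i\mdepth M_i.
\]
Writing $d_i=\depth M_i$ and $m_i=\mdepth M_i$, the module $\Dirsum_{i=1}^n M_i$ has maximal depth precisely when $\min_i d_i=\min_i m_i$. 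Throughout I would keep in mind the classical inequality $d_i\le m_i$ for every $i$, which in particular yields $\min_i d_i\le\min_i m_i$ (choose an index realizing $\min_i m_i$ and bound below).

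For the forward implication I would suppose $\min_i d_i=\min_i m_i=:a$. The key point is to choose $j$ to be an index realizing the \emph{minimal} $\mdepth$ rather than the minimal $\depth$: pick $j$ with $m_j=a$. Then $a\le d_j\le m_j=a$, where the first inequality holds because $a=\min_i d_i$ and the second is the classical bound. Hence $d_j=m_j=a$, so $M_j$ has maximal depth and simultaneously $\depth M_j=a\le\depth M_k$ for all $k$, which is exactly the desired $j$.

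For the converse, if some $j$ satisfies $d_j\le d_k$ for all $k$ (so $d_j=\min_i d_i$) and $d_j=m_j$, then
\[
\min_i d_i=d_j=m_j\ge\min_i m_i\ge\min_i d_i,
\]
forcing $\min_i d_i=\min_i m_i$, i.e. $\Dirsum_{i=1}^n M_i$ has maximal depth.

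There is no serious obstacle here; the only point requiring care, and the step I would flag, is the asymmetry in the choice of $j$ in the forward direction. One must select the index achieving the smallest $\mdepth$ and then let the inequality $d_j\le m_j$ do the work of showing that this same index also achieves the smallest $\depth$ and has maximal depth; selecting $j$ by minimal $\depth$ instead would not immediately produce maximal depth of $M_j$.
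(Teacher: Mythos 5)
Your proof is correct and rests on the same two facts the paper uses, namely $\depth\bigl(\Dirsum_{i=1}^n M_i\bigr)=\min_i\depth M_i$ and $\Ass\bigl(\Dirsum_{i=1}^n M_i\bigr)=\bigcup_i\Ass(M_i)$, together with the inequality $\depth\leq\mdepth$. The paper phrases the forward direction by chasing the witnessing associated prime into a summand of minimal depth, while you repackage the same point numerically via $\mdepth$ of the direct sum being $\min_i\mdepth M_i$; the substance is the same, and your choice of $j$ realizing the minimal $\mdepth$ is exactly the right one.
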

\begin{proof}
Suppose  $\Dirsum_{i=1}^n M_i$  has maximal depth. Thus there exists an associated prime $\pp$ of  $\Dirsum_{i=1}^n M_i$ such that $\depth \big( \Dirsum_{i=1}^n M_i\big)=\dim R/\pp$. Note that $\depth \big( \Dirsum_{i=1}^n M_i\big)=\depth M_s$ where $\depth M_s \leq  \depth M_k$ for all $k$. Since $\pp \in \Ass\big( \Dirsum_{i=1}^n M_i\big)= \bigcup_{i=1}^n\Ass(M_i)$, it follows that $\pp \in \Ass(M_s)$ where $\depth M_s \leq \depth M_k$ for all $k$. Indeed, otherwise  $\pp \in \Ass(M_l)$ for some $l$ and there exists $h$ such that  $\depth M_h<\depth M_l$. Hence $\depth M_s\leq \depth M_h<\depth M_l\leq \dim R/\pp$, a contradiction. Therefore, the conclusion follows. The other implication is obvious.
\end{proof}
\section{Generalized Cohen--Macaulay modules with maximal depth}

Let $(R, \mm)$ is a local ring and $M$ a finitely generated $R$-module. Recall that $M$ is called {\em generalized Cohen--Macaulay } module if the local cohomology module $H^i_{\mm}(M)$ is of finite length for all $i<\dim M$.
 Recall that a prime ideal $\pp$ of $R$ is called an {\em attached prime} of $M$  if there exists a
quotient $Q$ of $M$  such that $\pp =\Ann(Q)$. The set of attached primes of $M$ will be denoted by $\Att(M)$. If $M$ is Artinian, this definition is the same as MacDonald's definition. Note also that $H^i_{\mm}(M)$ is Artinian for all $i\geq 0$,  see \cite{BS}.
\begin{Fact}
\label{fl}{\em
Let $L$ be an Artinian $R$-module. Then $L\neq 0 $ if and only if $\Att(L) \neq \emptyset$,  and $L\neq 0$ is of finite length if and only if $\Att(L)=\{ \mm \}$, see \cite[Corollary 7.2.12]{BS}.}
\end{Fact}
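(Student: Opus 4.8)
The plan is to derive both equivalences from the quotient definition of $\Att$, bringing in the theory of secondary representations of Artinian modules only at the single point where it is genuinely indispensable. Throughout I use that $R$ is Noetherian and $(R,\mm)$ is local.

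For the first equivalence I would argue the contrapositive of one direction and give a direct construction for the other. If $L=0$, its only quotient is $0$, whose annihilator is $R$ and hence not prime, so $\Att(L)=\emptyset$; this settles $\Att(L)\neq\emptyset\Rightarrow L\neq 0$. For the converse, let $L\neq 0$ and consider the family $\mathcal{S}=\{\Ann_R(Q):Q\text{ a nonzero quotient of }L\}$ of proper ideals of $R$, which is nonempty because $\Ann_R(L)\in\mathcal{S}$. Since $R$ is Noetherian, $\mathcal{S}$ has a maximal element $\pp=\Ann_R(Q)$. I would then check that $\pp$ is prime: if $ab\in\pp$ with $b\notin\pp$, then multiplication by $b$ realizes $bQ$ as a nonzero quotient of $Q$, hence of $L$, with $\Ann_R(bQ)\supseteq\pp+(a)$; maximality forces $\Ann_R(bQ)=\pp$, so $a\in\pp$. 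Thus $\pp\in\Att(L)$, and in particular $\Att(L)\neq\emptyset$. Note that Artinianity is not even needed for this half.

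For the finite-length statement, suppose first that $L\neq 0$ has finite length. A composition series gives $\mm^{n}L=0$ with $n=\ell(L)$, so every nonzero quotient $Q$ satisfies $\mm^{n}\subseteq\Ann_R(Q)$; any prime of the form $\Ann_R(Q)$ then contains $\mm$ and, $\mm$ being maximal, equals $\mm$. Combined with $\Att(L)\neq\emptyset$ from the first part, this yields $\Att(L)=\{\mm\}$. For the converse, assume $\Att(L)=\{\mm\}$. Here I would invoke a minimal secondary representation $L=S_1+\dots+S_k$ of the Artinian module $L$, with each $S_i$ being $\pp_i$-secondary and $\{\pp_1,\dots,\pp_k\}=\Att(L)=\{\mm\}$; since $\mm$ then acts nilpotently on every $S_i$, we obtain $\mm^{n}L=0$ for a suitable $n$. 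Consequently $L$ is an Artinian module over the Artinian local ring $R/\mm^{n}$, and over such a ring every Artinian module has finite length (filter by powers of $\mm$ and observe that each $K$-vector-space factor is Artinian, hence finite dimensional); therefore $L$ has finite length.

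The step I expect to be the main obstacle is the converse of the finite-length statement: translating the purely ideal-theoretic hypothesis $\Att(L)=\{\mm\}$ into the annihilation $\mm^{n}L=0$ appears to require the existence of a secondary representation of an Artinian module together with nilpotency of $\mm$ on an $\mm$-secondary module. A related subtlety is reconciling the paper's literal definition of an attached prime (a prime occurring \emph{exactly} as $\Ann_R(Q)$) with the secondary-representation description, since a $\pp_i$-secondary quotient only guarantees $\sqrt{\Ann}=\pp_i$; one must still pass to a further quotient realizing $\pp_i$ as an exact annihilator, which is precisely what the maximal-annihilator argument of the first paragraph supplies.
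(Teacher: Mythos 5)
The paper gives no proof of this Fact at all --- it is quoted verbatim from \cite[Corollary 7.2.12]{BS} --- so there is nothing to compare against except the source you are implicitly reconstructing. Your argument is correct and is essentially the standard Brodmann--Sharp proof, assembled self-containedly: the maximal-annihilator argument for $L\neq 0\Rightarrow\Att(L)\neq\emptyset$ is sound (and, as you note, needs only that $R$ is Noetherian, not that $L$ is Artinian); the forward finite-length direction via $\mm^{\ell(L)}L=0$ is fine; and the converse correctly isolates the one place where Macdonald's theory is indispensable, namely that an Artinian module admits a minimal secondary representation whose attached primes agree with the quotient-annihilator definition the paper uses. Your closing remark addresses the only real subtlety --- that a $\pp_i$-secondary quotient only gives $\sqrt{\Ann}=\pp_i$, and one must pass to a further quotient (again by the maximal-annihilator argument, using that every nonzero quotient of a $\pp_i$-secondary module is $\pp_i$-secondary) to realize $\pp_i$ as an exact annihilator --- so I see no gap.
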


\begin{Fact}
\label{att}{\em
Let $\pp \in \Ass(M)$ with $\dim R/\pp=j$. Then $H^j_{\mm}(M)\neq 0$ and $\pp \in \Att(H^j_{\mm}(M))$, see \cite[Exercise 11.3.9]{BS}.}
\end{Fact}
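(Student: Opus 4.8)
The plan is to prove the apparently stronger assertion $\pp\in\Att(H^{j}_{\mm}(M))$; once this is established, $\Att(H^{j}_{\mm}(M))\neq\emptyset$, so $H^{j}_{\mm}(M)\neq 0$ for free by Fact \ref{fl}. First I would reduce to the case where $R$ is complete. The completion map $R\to\widehat R$ is faithfully flat with $\dim\widehat R/\pp\widehat R=\dim R/\pp=j$, and the inclusion $R/\pp\hookrightarrow M$ completes to $\widehat R/\pp\widehat R\hookrightarrow\widehat M$, so every minimal prime of $\pp\widehat R$ lies in $\Ass_{\widehat R}\widehat M$. Choosing such a minimal prime $\widehat\pp$ with $\dim\widehat R/\widehat\pp=j$ (one exists because $\dim\widehat R/\pp\widehat R=j$), we have $\widehat\pp\cap R=\pp$. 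Since $H^{j}_{\mm}(M)$ is Artinian it coincides with $H^{j}_{\widehat\mm}(\widehat M)$, and $\Att_{R}(H^{j}_{\mm}(M))=\{P\cap R:P\in\Att_{\widehat R}(H^{j}_{\widehat\mm}(\widehat M))\}$; hence it suffices to show $\widehat\pp\in\Att_{\widehat R}(H^{j}_{\widehat\mm}(\widehat M))$, and we may assume from now on that $R$ is complete and that $\pp\in\Ass_R M$ with $\dim R/\pp=j$.

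With $R$ complete local, write $R=S/\aa$ with $(S,\mm_{S})$ regular local of dimension $n$, so that $M$ is a finitely generated $S$-module and $\pp$ is a prime of $S$ with $\dim S/\pp=j$, hence $\height_{S}\pp=n-j$. Let $E=E_{R}(R/\mm)$ and write $(-)^{\vee}=\Hom_{R}(-,E)$ for Matlis duality. For the Artinian module $H^{j}_{\mm}(M)$ one has $\Att(H^{j}_{\mm}(M))=\Ass((H^{j}_{\mm}(M))^{\vee})$, and local duality over $S$ (with $\omega_{S}\cong S$, so no shift) gives $(H^{j}_{\mm}(M))^{\vee}\cong\Ext^{n-j}_{S}(M,S)$. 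Thus the problem becomes: show $\pp\in\Ass_{S}(\Ext^{n-j}_{S}(M,S))$.

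Since $\Ass$ and $\Ext$ localize, this holds if and only if $\pp S_{\pp}\in\Ass_{S_{\pp}}(\Ext^{n-j}_{S_{\pp}}(M_{\pp},S_{\pp}))$. Now $S_{\pp}$ is regular local of dimension $c:=\height_{S}\pp=n-j$, and $\pp S_{\pp}\in\Ass_{S_{\pp}}(M_{\pp})$ because $\pp\in\Ass_{S}M$. Applying local duality a second time, over $S_{\pp}$ and in the top degree, yields $\Ext^{c}_{S_{\pp}}(M_{\pp},S_{\pp})\cong(\Gamma_{\pp S_{\pp}}(M_{\pp}))^{\vee}$, the Matlis dual over $S_{\pp}$ of the $\pp S_{\pp}$-torsion submodule of $M_{\pp}$. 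This torsion submodule is nonzero (as $\pp S_{\pp}\in\Ass M_{\pp}$) and, being finitely generated and $\pp S_{\pp}$-power-torsion, has finite length; hence its dual has finite length with unique associated prime $\pp S_{\pp}$. Therefore $\pp S_{\pp}\in\Ass_{S_{\pp}}(\Ext^{c}_{S_{\pp}}(M_{\pp},S_{\pp}))$, which is exactly what was needed.

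The conceptual heart is this last, top-degree instance of local duality, which identifies $\Ext^{c}$ with the dual of a torsion module and thereby converts the hypothesis $\pp\in\Ass M$ into the desired attached-prime membership. I expect the main obstacle to be bookkeeping rather than ideas: one must check that completion preserves the coheight of $\pp$ and carries $\pp$ to an associated prime of $\widehat M$ of coheight $j$, choose the regular presentation $S\to R$ with the correct normalization so that the local-duality shift is precisely $n-j$, and verify that $\Ext$ localizes compatibly. It is worth noting that a more elementary attempt---reducing along the dimension filtration via $H^{j}_{\mm}(M)\cong H^{j}_{\mm}(M/M_{j-1})$ and the top-cohomology formula $\Att(H^{\dim}_{\mm})=\Assh$---runs into the difficulty that attached primes transfer to quotients but not to submodules, so the image of $H^{j}_{\mm}(M_{j})$ inside $H^{j}_{\mm}(M)$ need not retain $\pp$; this is precisely why the duality route is preferable. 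As a consistency check, the special case $j=\dim M$ recovers the known equality $\Att(H^{\dim M}_{\mm}(M))=\Assh(M)$ quoted in the introduction.
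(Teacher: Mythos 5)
The paper offers no proof of this statement: it is quoted as a Fact with a pointer to \cite[Exercise 11.3.9]{BS}, so there is nothing internal to compare against, and your argument should be judged as a solution of that exercise. As such it is correct. The reduction to the complete case is sound (a minimal prime $\widehat\pp$ of $\pp\widehat R$ of coheight $j$ lies in $\Ass_{\widehat R}\widehat M$ and contracts to $\pp$, and the attached primes of the Artinian module $H^j_{\mm}(M)$ over $R$ are exactly the contractions of its attached primes over $\widehat R$), and the two applications of local duality, over the complete regular presentation $S$ and over $S_{\pp}$, correctly convert the problem into showing that $\Ext^{n-j}_{S_{\pp}}(M_{\pp},S_{\pp})$ is a nonzero module of finite length. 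One small imprecision: $S_{\pp}$ is not complete, so local duality there gives $\Gamma_{\pp S_{\pp}}(M_{\pp})\iso \Ext^{n-j}_{S_{\pp}}(M_{\pp},S_{\pp})^{\vee}$ rather than the reversed isomorphism you wrote; but since $\Gamma_{\pp S_{\pp}}(M_{\pp})$ is nonzero of finite length, so is $\Ext^{n-j}_{S_{\pp}}(M_{\pp},S_{\pp})$ (Matlis duality preserves finite length in both directions for finitely generated modules), and a nonzero finite-length module has $\pp S_{\pp}$ as its unique associated prime, so your conclusion stands. For comparison, the route most consonant with the tools this paper already invokes is shorter: after completing, apply the Shifted Localization Principle \cite[Theorem 11.3.2]{BS} with $i=j$ at the prime $\pp$, which identifies $\Att_{R_{\pp}}\big(H^{0}_{\pp R_{\pp}}(M_{\pp})\big)=\{\pp R_{\pp}\}$ with $\{\qq R_{\pp}:\qq\in\Att(H^j_{\mm}(M)),\ \qq\subseteq\pp\}$ and yields $\pp\in\Att(H^j_{\mm}(M))$ at once. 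Your duality argument buys independence from that principle at the cost of more bookkeeping; both are valid.
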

\begin{Proposition}
\label{depthnotfg}
Assume $M$ has maximal depth with $\depth M>0$. Then $H^{\depth M}_{\mm}(M)$ is not finitely generated.
\end{Proposition}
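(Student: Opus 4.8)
The plan is to derive a contradiction by comparing the attached primes of $H^{\depth M}_{\mm}(M)$ under the competing hypotheses that $M$ has maximal depth and that $H^{\depth M}_{\mm}(M)$ is finitely generated. Write $t=\depth M$, so $t>0$ by assumption. First I would invoke the maximal depth property to produce an associated prime $\pp \in \Ass(M)$ with $\dim R/\pp = t$. This is exactly the content of the definition, and it is the bridge between the arithmetic quantity $\depth M$ and the geometric data recorded in $\Att(H^t_{\mm}(M))$.

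Next I would feed this prime into Fact \ref{att}: since $\pp \in \Ass(M)$ with $\dim R/\pp = t$, we obtain $H^t_{\mm}(M)\neq 0$ and, crucially, $\pp \in \Att(H^t_{\mm}(M))$. The point of this step is that the associated prime witnessing maximal depth shows up as an attached prime of the local cohomology in degree $t$, and it is a prime strictly smaller than $\mm$, because $\dim R/\pp = t > 0$ forces $\pp \neq \mm$.

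The contradiction then comes from the structure of finitely generated Artinian modules. Suppose toward a contradiction that $H^t_{\mm}(M)$ is finitely generated. Recall that $H^t_{\mm}(M)$ is always Artinian (see \cite{BS}); a module that is simultaneously finitely generated and Artinian over a Noetherian local ring has finite length. Since we already know $H^t_{\mm}(M)\neq 0$, Fact \ref{fl} applies and yields $\Att(H^t_{\mm}(M)) = \{\mm\}$. But this contradicts $\pp \in \Att(H^t_{\mm}(M))$ with $\pp \neq \mm$, completing the argument.

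I do not expect a genuine obstacle here, as the proof is a short assembly of the two recorded facts about attached primes; the only point requiring a moment's care is the standard observation that a finitely generated Artinian module has finite length, which is what lets Fact \ref{fl} collapse the attached prime set to $\{\mm\}$ and clash with the nonmaximal prime $\pp$ supplied by the maximal depth hypothesis.
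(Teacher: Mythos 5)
Your proposal is correct and follows essentially the same route as the paper: produce $\pp\in\Ass(M)$ with $\dim R/\pp=\depth M$ via the maximal depth hypothesis, place $\pp$ in $\Att(H^{\depth M}_{\mm}(M))$ via Fact \ref{att}, and contradict $\Att(H^{\depth M}_{\mm}(M))=\{\mm\}$ from Fact \ref{fl} under the finite generation assumption. The only cosmetic difference is that you phrase the contradiction as $\pp\neq\mm$ while the paper derives $\pp=\mm$ and hence $\depth M=0$; these are the same argument.
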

\begin{proof}
Our assumption says that there exists $\pp\in \Ass(M)$ such that $\depth M=\dim R/\pp$.
 Fact \ref{att} provides $\pp \in \Att(H^{\depth M}_{\mm}(M))$  and of course $H^{\depth M}_{\mm}(M)\neq 0$.  On the contrary, suppose $H^{\depth M}_{\mm}(M)$ is finitely generated. As $H^{\depth M}_{\mm}(M)$ is always Artinian, it has finite length
 and so $\Att(H^{\depth M}_{\mm}(M))=\{\mm\}$  by  Fact \ref{fl}. Hence, $\pp=\mm$. Therefore, $\depth M=0$, a contradiction.
\end{proof}
We have the following classification of generalized Cohen--Macaulay modules with maximal depth.
\begin{Corollary}
Suppose $M$ is generalized Cohen--Macaulay module with $\depth M>0$. Then the following statements are equivalent:
\begin{itemize}
\item[{(a)}]  $M$ has maximal depth,
\item[{(b)}]  $M$ is sequentially Cohen--Macaulay,
\item[{(c)}]  $M$ is Cohen--Macaulay.
\end{itemize}
\end{Corollary}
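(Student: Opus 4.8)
The plan is to establish the cycle of implications (a) $\Rightarrow$ (c) $\Rightarrow$ (b) $\Rightarrow$ (a), which makes the three conditions equivalent. Two of these links are essentially already in hand. The implication (b) $\Rightarrow$ (a) is exactly the second assertion of Proposition \ref{mdepth}, so nothing new is needed there. For (c) $\Rightarrow$ (b) I would invoke the standard fact that a Cohen--Macaulay module is sequentially Cohen--Macaulay, giving a short self-contained justification in the language of the paper: since $M$ is Cohen--Macaulay it is unmixed, so every nonzero submodule has dimension $d=\dim M$; hence the dimension filtration collapses to $0=M_0=\cdots=M_{d-1}\subset M_d=M$, whose only nonzero quotient $M_d/M_{d-1}=M$ is Cohen--Macaulay. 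Thus the dimension filtration is a Cohen--Macaulay filtration and $M$ is sequentially Cohen--Macaulay.

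The real content is the implication (a) $\Rightarrow$ (c), and this is where I would concentrate. The idea is to play the finiteness coming from the generalized Cohen--Macaulay hypothesis against the non-finiteness supplied by Proposition \ref{depthnotfg}. Assume $M$ has maximal depth and, for contradiction, that $M$ is \emph{not} Cohen--Macaulay, so that $\depth M<\dim M$. Set $t=\depth M$. Because $\depth M>0$ by hypothesis and $M$ has maximal depth, Proposition \ref{depthnotfg} shows that $H^{t}_{\mm}(M)$ is not finitely generated. On the other hand, the strict inequality $t=\depth M<\dim M$ places the index $t$ in the range where the generalized Cohen--Macaulay hypothesis applies, so $H^{t}_{\mm}(M)$ has finite length and is in particular finitely generated. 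This contradiction forces $\depth M=\dim M$, i.e. $M$ is Cohen--Macaulay, closing the cycle.

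I do not expect a serious obstacle here, since Proposition \ref{depthnotfg} does the heavy lifting; the one point that genuinely must be checked is that the index $\depth M$ at which that proposition produces a non-finitely-generated local cohomology module really falls in the range $i<\dim M$ where the generalized Cohen--Macaulay condition guarantees finite length. This is precisely guaranteed by the supposition $\depth M<\dim M$, so once the contradiction is set up no further calculation is required. The roles of the two standing hypotheses are then transparent: $\depth M>0$ is what allows Proposition \ref{depthnotfg} to be invoked, while the generalized Cohen--Macaulay property is what makes the failure of Cohen--Macaulayness produce a finitely generated $H^{t}_{\mm}(M)$.
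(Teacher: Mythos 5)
Your proposal is correct and follows essentially the same route as the paper: the heart of the matter, (a) $\Rightarrow$ (c), is handled identically by playing Proposition \ref{depthnotfg} against the finite length of $H^{\depth M}_{\mm}(M)$ guaranteed by the generalized Cohen--Macaulay hypothesis when $\depth M<\dim M$. The remaining implications are arranged in a slightly different cycle than in the paper, but rest on the same ingredients (Proposition \ref{mdepth} and the standard fact that Cohen--Macaulay implies sequentially Cohen--Macaulay).
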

\begin{proof}
The implication $(c)\Rightarrow (b)$ is obvious. The implication  $(b)\Rightarrow (c)$ follows from Proposition \ref{mdepth}. For $(a)\Rightarrow (c)$, we need to show $\depth M=\dim M$. Suppose $\depth M<\dim M$. Since $M$ is generalized Cohen--Macaulay, $H^{\depth M}_{\mm}(M)$ is finitely generated. This contradicts with Proposition \ref{depthnotfg}. Therefore, $\depth M=\dim M$.
\end{proof}


\begin{Example}{\em
Let $S=K[x_1,x_2,x_3, x_4]$ be the standard graded polynomial
ring
 with the unique graded maximal ideal $\mm=(x_1, x_2, x_3, x_4)$. We set  $R=S/I$ where $I=\pp_1\cap\pp_2$
 with $\pp_1=(x_1,x_2)$ and $\pp_2=(x_3, x_4)$.
   The exact sequence
  $0\rightarrow R \rightarrow S/\pp_1 \oplus S/\pp_2 \rightarrow S/{\mm}\rightarrow 0$
  yields $H^0_{\mm}(R)=0$ and $H^1_{\mm}(R)\iso S/{\mm}$. Hence $H^1_{\mm}(R)$ is finitely generated. Therefore, $R$ is generalized Cohen--Macaulay. As $\depth R=1$ and $\mdepth R=2$, the ring $R$ has no maximal depth. }
\end{Example}
We end this section with the following remark
\begin{Remark}
\label{not}{\em
Assume $M$ has maximal depth with $\depth M>0$. Then $H^{\depth M}_{\mm}(M)$ is not finitely generated.
On the other hand, since $\dim M=\dim R/\pp$ for some $\pp \in \Ass(M)$, it follows from  Fact \ref{att}  that $H^{\dim M}_{\mm}(M)$ is not finitely generated, too. All the non-vanishing local cohomology modules of a sequentially Cohen--Macaulay module are not finitely generated, see the isomorphisms (\ref{notfg}). So inspired by these facts, we may ask the following question: Assume $M$ has maximal depth and $H^i_{\mm}(M)\neq 0$. Does it follow $H^i_{\mm}(M)$ is not finitely generated?}
\end{Remark}

\section{Localization and Attached primes }
In this section, $(R, \mm)$ is a local ring and $M$ a finitely generated $R$-module.
We recall the following known fact as {\em Depth Inequality}.
\begin{Fact}
\label{Depth Inequality}{\em
Let  $\pp \in \Supp(M)$. Then the next inequality holds
\[
\depth M\leq \depth M_{\pp}+ \dim R/{\pp}.
\]}
\end{Fact}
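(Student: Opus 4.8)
The plan is to argue by induction on $t=\depth M$, using as the driving tool the basic inequality $\depth M\le \dim R/\qq$ valid for every $\qq\in\Ass(M)$ (recorded in the Preliminaries). The base case $t=0$ is immediate: both summands on the right are nonnegative, so $0=\depth M\le \depth M_{\pp}+\dim R/\pp$. For $t>0$ I would split according to whether $\pp$ contains an $M$-regular element, that is, whether $\pp\subseteq \bigcup_{\qq\in\Ass(M)}\qq$.

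If $\pp$ contains no $M$-regular element, then since $\Ass(M)$ is finite, prime avoidance yields $\pp\subseteq\qq$ for some $\qq\in\Ass(M)$. Then $\dim R/\pp\ge \dim R/\qq\ge\depth M$, the last step being the basic inequality, so $\depth M\le\dim R/\pp\le \depth M_{\pp}+\dim R/\pp$, and the induction hypothesis is not even needed in this branch.

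In the remaining, main case I would choose an $M$-regular element $x\in\pp$. Flatness of localization shows $x/1$ is $M_{\pp}$-regular, and since $x\in\pp$ it lies in the maximal ideal $\pp R_{\pp}$ of $R_{\pp}$; hence $\depth M_{\pp}\ge 1$ and $\depth (M/xM)_{\pp}=\depth M_{\pp}-1$. On the other hand $\depth M/xM=t-1$, and $\pp\in\Supp(M/xM)$ because $(M/xM)_{\pp}=M_{\pp}/xM_{\pp}\ne 0$ by Nakayama. Applying the induction hypothesis to $M/xM$ at the same prime $\pp$ gives $t-1\le(\depth M_{\pp}-1)+\dim R/\pp$, which rearranges to the desired $\depth M\le \depth M_{\pp}+\dim R/\pp$.

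The main obstacle is exactly the interaction between regularity and membership in $\pp$: the inductive step requires a regular element lying \emph{inside} $\pp$, so that the localized depth genuinely drops by one, yet such an element need not exist, which is why the case distinction is forced. The role of the first case is that precisely when no $M$-regular element sits in $\pp$, the term $\depth M_{\pp}$ becomes irrelevant and the basic inequality alone closes the gap. One could alternatively run the argument through the vanishing pattern of $\Ext^i_R(R/\pp,M)$, but reducing by a regular element keeps the bookkeeping lightest.
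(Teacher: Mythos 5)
Your argument is correct. Note that the paper records this Depth Inequality as a known \emph{Fact} and supplies no proof of its own, so there is nothing to compare against; your induction on $\depth M$, with the case split according to whether $\pp$ consists entirely of zerodivisors on $M$ (handled by prime avoidance over $\Ass(M)$ and the basic inequality $\depth M\le\dim R/\qq$) or contains an $M$-regular element $x$ (handled by passing to $M/xM$ and using that $x/1\in\pp R_{\pp}$ drops $\depth M_{\pp}$ by exactly one), is the standard textbook proof of this statement and all of its steps check out.
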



We set $\Assd(M)=\{\pp \in \Ass(M): \depth M=\dim R/{\pp}\}$. Observe that $M$ has maximal depth if and only if $\Assd(M)\neq\emptyset$. We have the following relation between localization and maximal depth property of $M$.
\begin{Proposition}
\label{localize}
The following statements hold:
\begin{itemize}
\item[{(a)}] If $\pp \in \Ass(M)$, then $M_{\pp}$ has maximal depth.
\item[(b)] If $\pp \in \Supp(M)$ contains an element of $\Assd(M)$, then $M_{\pp}$ has maximal depth. Moreover,
\[
\depth M = \depth M_{\pp}+ \dim R/{\pp}.
\]
\end{itemize}
\end{Proposition}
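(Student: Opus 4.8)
The plan is to treat the two parts separately: part (a) is immediate after localizing, while part (b) reduces to a short squeeze that combines the Depth Inequality with an elementary chain-concatenation estimate. Throughout I would exploit the behaviour of associated primes under localization, namely that $\Ass_{R_\pp}(M_\pp)=\{\qq R_\pp:\qq\in\Ass(M),\ \qq\subseteq\pp\}$.

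For part (a), I would simply observe that if $\pp\in\Ass(M)$ then $\pp R_\pp\in\Ass_{R_\pp}(M_\pp)$, so the maximal ideal of the local ring $R_\pp$ is itself an associated prime of $M_\pp$. Hence $\depth M_\pp=0=\dim R_\pp/\pp R_\pp$, and since $\depth M_\pp=0$ forces $\mdepth M_\pp=0$, the module $M_\pp$ has maximal depth.

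For part (b), I would choose $\qq\in\Assd(M)$ with $\qq\subseteq\pp$, so that $\depth M=\dim R/\qq$. Because $\qq\subseteq\pp$ and $\qq\in\Ass(M)$, localization gives $\qq R_\pp\in\Ass(M_\pp)$, and the classical bound then yields $\depth M_\pp\leq\dim R_\pp/\qq R_\pp$. Next I would invoke the Depth Inequality (Fact \ref{Depth Inequality}) $\depth M\leq\depth M_\pp+\dim R/\pp$, together with the chain-concatenation estimate $\dim R/\pp+\dim R_\pp/\qq R_\pp\leq\dim R/\qq$, obtained by splicing a longest saturated chain of primes from $\qq$ to $\pp$ with one from $\pp$ to $\mm$. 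Assembling these three inequalities gives
\[
\dim R/\qq=\depth M\leq\depth M_\pp+\dim R/\pp\leq\dim R_\pp/\qq R_\pp+\dim R/\pp\leq\dim R/\qq,
\]
which forces every inequality to be an equality.

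From the forced equalities I would read off both conclusions at once: $\depth M_\pp=\dim R_\pp/\qq R_\pp$ with $\qq R_\pp\in\Ass(M_\pp)$ shows $\mdepth M_\pp=\depth M_\pp$, i.e.\ $M_\pp$ has maximal depth, while $\depth M=\depth M_\pp+\dim R/\pp$ is the displayed formula. The step I expect to carry the real content is the chain-concatenation estimate, together with the observation that only the direction $\dim R/\qq\geq\dim R/\pp+\dim R_\pp/\qq R_\pp$ is needed; this direction holds in any Noetherian local ring, so no catenary or equidimensionality hypothesis must be imposed, and the Depth Inequality supplies the matching bound that collapses the chain.
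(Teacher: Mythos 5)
Your argument is correct and follows essentially the same route as the paper: part (a) via $\pp R_\pp\in\Ass_{R_\pp}(M_\pp)$ forcing $\depth M_\pp=0$, and part (b) via the same squeeze combining $\depth M_\pp\le\dim R_\pp/\qq R_\pp$, the Depth Inequality, and the chain-splicing bound $\dim R_\pp/\qq R_\pp+\dim R/\pp\le\dim R/\qq$ (the paper writes $\dim R_\pp/\qq R_\pp$ as $\height\pp/\qq$, but this is the same estimate). Your closing observation that only the one direction of the chain inequality is needed, with no catenary hypothesis, is accurate.
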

\begin{proof}
(a): Let $\pp \in \Ass(M)$. Then $\pp R_{\pp} \in \Ass_{R_{\pp}}(M_{\pp})$. It follows that $\depth M_{\pp}=0$, and hence $M_{\pp}$ has maximal depth.

 (b): By our assumption, there exists $\qq \in \Ass(M)$ such that $\depth M=\dim R/\qq$ with $\qq \subseteq \pp$. Thus ${\qq}R_{\pp}\in \Ass_{R_{\pp}}(M_{\pp})$. Hence
 \begin{eqnarray*}
 \depth M_{\pp} & \leq & \dim R_{\pp}/{\qq}R_{\pp} \\
                & =&  \height {\pp} R_{\pp}/{\qq}R_{\pp}\\
                & =&  \height {\pp}/{\qq}.
\end{eqnarray*}
Observe that
\begin{eqnarray*}
\dim R/\qq =\depth M & \leq &  \depth M_{\pp}+\dim R/\pp \\
                     & \leq &     \height {\pp}/{\qq}+\dim R/{\pp}\\
                     &\leq &  \dim R/{\qq}-\dim R/{\pp}+\dim R/{\pp}\\
                     & =&     \dim R/{\qq}.
\end{eqnarray*}
Fact \ref{Depth Inequality} explains the first step in this sequence and the remaining steps are standard.
Consequently,
\[
\depth M_{\pp}=\dim R_{\pp}/{\qq}R_{\pp} \quad\text{for some}\quad  {\qq}R_{\pp}\in \Ass_{R_{\pp}}(M_{\pp}).
 \]
 Therefore,  $M_{\pp}$ has maximal depth. Furthermore, the desired equality also follows.
\end{proof}
\begin{Corollary}
If $M$ is Cohen--Macaulay, then $M_{\pp}$ has maximal depth for all $\pp \in \Supp(M)$. Moreover,
\[
\depth M = \depth M_{\pp}+ \dim R/{\pp}.
\]
\end{Corollary}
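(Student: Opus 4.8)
The plan is to deduce the statement directly from Proposition~\ref{localize}(b). That proposition already delivers both the maximal depth of $M_{\pp}$ and the displayed equality $\depth M=\depth M_{\pp}+\dim R/{\pp}$ as soon as one knows that $\pp$ contains an element of $\Assd(M)$. So the whole task reduces to checking, under the hypothesis that $M$ is Cohen--Macaulay, that \emph{every} $\pp\in\Supp(M)$ contains a prime from $\Assd(M)$.

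First I would record that a Cohen--Macaulay module is unmixed in the strong sense needed here. Since $\depth M=\dim M$ and, by the standard theory, $\dim R/\qq=\dim M$ for every $\qq\in\Ass(M)$, we get $\dim R/\qq=\depth M$ for all $\qq\in\Ass(M)$. This is exactly the assertion $\Ass(M)=\Assd(M)$. Next, given an arbitrary $\pp\in\Supp(M)=\Var(\Ann M)$, I would pick a minimal prime $\qq$ of $\Ann M$ with $\qq\subseteq\pp$; such a $\qq$ lies in $\Min(M)\subseteq\Ass(M)=\Assd(M)$. Hence $\pp$ contains the element $\qq$ of $\Assd(M)$, and Proposition~\ref{localize}(b) applies to give that $M_{\pp}$ has maximal depth together with $\depth M=\depth M_{\pp}+\dim R/{\pp}$.

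This is essentially a bookkeeping reduction rather than a computation, so I do not expect a serious obstacle. The only point that deserves care is the unmixedness step, i.e.\ the identification $\Ass(M)=\Assd(M)$, which relies on the defining equality $\depth M=\dim M$ together with the classical fact that all associated primes of a Cohen--Macaulay module share the same coheight; everything else (existence of a minimal prime of $M$ below a given prime in $\Supp(M)$, and the inclusion $\Min(M)\subseteq\Ass(M)$) is standard.
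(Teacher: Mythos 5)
Your proof is correct and follows essentially the same route as the paper: reduce to Proposition~\ref{localize}(b) by using the unmixedness of a Cohen--Macaulay module to see that every $\qq\in\Ass(M)$ lies in $\Assd(M)$, and then find such a $\qq$ below the given $\pp\in\Supp(M)$. The only cosmetic difference is that you route the last step through a minimal prime of $\Ann M$, while the paper simply invokes the standard fact that any prime in $\Supp(M)$ contains an associated prime.
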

\begin{proof}
As $\pp \in \Supp(M)$, there exists $\qq \in \Ass(M)$ such that $\qq\subseteq \pp$. Since $M$ is Cohen--Macaulay, it is unmixed and so $\depth M=\dim R/{\qq}$. Hence $\qq \in \Assd(M)$.  Thus the assumption of Proposition \ref{localize} is satisfied. Therefore, the conclusion follows.
\end{proof}

Recall that by $\Assh(M)$ we mean the set of all associated primes $\pp \in \Ass(M)$ such that $\dim R/\pp=\dim M$. The attached primes of top local cohomology module are known. In fact,
 \begin{equation}
 \label{assh}
\Att(H^{\dim M}_{\mm}(M))=\Assh(M),
\end{equation}
 see \cite[Theorem 7.3.2]{BS}.
  In the following we consider the attached primes of $H^i_{\mm}(M)$ whenever $i<\dim M$. From now on,  $(R, \mm)$ is a local ring which is a homomorphic image of a Gorenstein local ring.
\begin{Proposition}
\label{attdepth}
Assume $\pp \in \Supp(M)$ contains an element of $\Assd(M)$. Then
\[
\min \Att(H^{\depth M}_{\mm}(M))=\{ \pp \in \Ass(M): \dim R/\pp=\depth M \}.
\]
\end{Proposition}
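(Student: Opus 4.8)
Set $t=\depth M$ and note that the right-hand side is exactly $\Assd(M)$, so the goal is the equality $\min\Att(H^{t}_{\mm}(M))=\Assd(M)$. The plan is to prove the two inclusions separately, the one external ingredient being a dimension bound for the attached primes of local cohomology: for every $i$ and every $\qq\in\Att(H^{i}_{\mm}(M))$ one has $\dim R/\qq\leq i$. I would obtain this from local duality. Since $R$ is a homomorphic image of a Gorenstein local ring $S$ with $\dim S=n$, local duality identifies $H^{i}_{\mm}(M)$ with the Matlis dual of $\Ext^{n-i}_{S}(M,S)$, so that the attached primes of $H^{i}_{\mm}(M)$ are the associated primes of $\Ext^{n-i}_{S}(M,S)$. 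For such a $\qq$ one has $\Ext^{n-i}_{S_{\qq}}(M_{\qq},S_{\qq})\neq 0$; as $S_{\qq}$ is Gorenstein of dimension $\dim S_{\qq}=n-\dim R/\qq$, hence of injective dimension $n-\dim R/\qq$, this nonvanishing forces $n-i\leq n-\dim R/\qq$, i.e. $\dim R/\qq\leq i$.

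For the inclusion $\Assd(M)\subseteq\min\Att(H^{t}_{\mm}(M))$, let $\pp\in\Assd(M)$, so $\pp\in\Ass(M)$ with $\dim R/\pp=t$. Then Fact \ref{att} gives $\pp\in\Att(H^{t}_{\mm}(M))$. To see that $\pp$ is minimal there, suppose $\qq\in\Att(H^{t}_{\mm}(M))$ with $\qq\subseteq\pp$. Because $R$ is catenary, being a homomorphic image of a Gorenstein ring, the containment $\qq\subseteq\pp$ yields $\dim R/\qq\geq\dim R/\pp=t$, while the bound above gives $\dim R/\qq\leq t$; hence $\dim R/\qq=\dim R/\pp$ with $\qq\subseteq\pp$, which forces $\qq=\pp$. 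For the reverse inclusion $\min\Att(H^{t}_{\mm}(M))\subseteq\Assd(M)$, take $\pp\in\min\Att(H^{t}_{\mm}(M))$. Since $H^{t}_{\mm}(M)$ is annihilated by $\Ann M$ we have $\pp\in\Supp(M)$, so the hypothesis supplies $\qq\in\Assd(M)$ with $\qq\subseteq\pp$. By the inclusion just established, $\qq\in\Att(H^{t}_{\mm}(M))$, and minimality of $\pp$ then forces $\qq=\pp$; thus $\pp\in\Assd(M)$.

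The substantive point is the dimension bound $\dim R/\qq\leq i$ for $\qq\in\Att(H^{i}_{\mm}(M))$, together with the local-duality identification of the attached primes that yields it; this is exactly where the standing assumption that $R$ is a homomorphic image of a Gorenstein local ring enters. Once that bound and the catenarity of $R$ are in hand, the rest is minimality bookkeeping, and the containment hypothesis on elements of $\Supp(M)$ is precisely what promotes a minimal attached prime of $H^{t}_{\mm}(M)$ to a member of $\Assd(M)$. I expect the local-duality step to be the only delicate part; the two inclusions are then routine.
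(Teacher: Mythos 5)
Your proof is correct, and the two inclusions at the end match the paper's bookkeeping exactly (in particular your reverse inclusion, using $\Ann M\subseteq\Ann H^{t}_{\mm}(M)$ and the hypothesis on $\Supp(M)$, is the same argument). Where you genuinely diverge is in the minimality step. The paper localizes: given $\qq\subseteq\pp$ with $\qq\in\Att(H^{t}_{\mm}(M))$ and $\dim R/\pp=t$, it applies the Shifted Localization Principle \cite[Theorem 11.3.2]{BS} to land in $\Att_{R_{\pp}}\bigl(H^{0}_{\pp R_{\pp}}(M_{\pp})\bigr)$, which equals $\{\pp R_{\pp}\}$ because $H^{0}$ of a finitely generated module has finite length, forcing $\qq=\pp$. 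You instead invoke the global dimension bound $\dim R/\qq\leq i$ for every $\qq\in\Att(H^{i}_{\mm}(M))$, extracted from local duality over the Gorenstein cover, and combine it with $\dim R/\qq\geq\dim R/\pp$ (which, note, needs no catenarity --- a strict containment $\qq\subsetneq\pp$ always strictly increases $\dim R/\qq$, which is what actually closes your argument). Both routes rest on the standing hypothesis that $R$ is a homomorphic image of a Gorenstein local ring, and both key facts are in \cite{BS}; your dimension bound is arguably the cleaner statement to quote and also handles the $\depth M=0$ case uniformly, where the paper treats it separately via Fact \ref{fl}, while the paper's localization argument avoids any appeal to duality and stays entirely within the attached-primes formalism it already uses elsewhere in the section.
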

\begin{proof}
 We set $\depth M=r$. If  $r=0$, then $\mm \in \Ass(M)$ and $H^0_{\mm}(M)$ is of finite length. Hence  $\Att(H^0_{\mm}(M))=\{\mm \}$ by Fact \ref{fl}. Therefore, the result holds in this case. Now let $r>0$ and  $\pp \in \Assd(M)$. Thus $\pp \in \Ass(M)$ and $\dim R/\pp=r$. Fact \ref{att} provides $\pp \in \Att(H^r_{\mm}(M))$. We need to show that $\pp$ is minimal. Suppose $\qq \subseteq \pp$ where $\qq \in \Att(H^r_{\mm}(M))$. By Shifted Localization Principle
\[
\qq R_{\pp}\in \Att_{R_{\pp}}\big(H_{\pp R_{\pp}}^{0}(M_{\pp})\big)=\{\pp R_{\pp} \},
\]
see, \cite[Theorem 11.3.2]{BS}.
It follows that $\qq R_{\pp}=\pp R_{\pp}$ and hence $\pp=\qq$.

Now let $\pp \in \min \Att(H^r_{\mm}(M))$. Thus $\Ann M\subseteq \Ann H^r_{\mm}(M)\subseteq \pp$ and so $\pp \in \Supp(M)$. Our assumption implies $\qq \subseteq \pp$ where $\qq \in \Ass(M)$ and $\depth M=\dim R/\qq=r$. Hence  $\qq \in \Att(H^r_{\mm}(M))$ by Fact \ref{att}. Since $\pp$ is minimal in $\Att(H^r_{\mm}(M))$, it follows that $\pp=\qq$ and hence $\pp \in \Assd(M)$.
\end{proof}
As a consequence of Proposition \ref{attdepth} and (\ref{assh}) we have the following
\begin{Corollary}
\label{cm}
If $M$ is Cohen--Macaulay of dimension $d$, then
\[
\Att(H^d_{\mm}(M))=\min \Att(H^d_{\mm}(M))=\Assd(M)=\Ass(M).
\]
\end{Corollary}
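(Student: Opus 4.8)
The plan is to assemble the four equalities from the results already established, the crucial observation being that Cohen--Macaulayness forces $\depth M=\dim M=d$, so that the local cohomology module $H^d_\mm(M)$ is simultaneously the top cohomology $H^{\dim M}_\mm(M)$ governed by (\ref{assh}) and the module $H^{\depth M}_\mm(M)$ governed by Proposition \ref{attdepth}.

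First I would settle $\Assd(M)=\Ass(M)$. Since $M$ is Cohen--Macaulay it is unmixed, so $\dim R/\pp=\dim M=d$ for every $\pp\in\Ass(M)$; combined with $\depth M=\dim M=d$ this gives $\depth M=\dim R/\pp$ for each such $\pp$, whence every associated prime lies in $\Assd(M)$. As the reverse inclusion $\Assd(M)\subseteq\Ass(M)$ is immediate from the definition, $\Assd(M)=\Ass(M)$, and both coincide with $\Assh(M)$.

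Next I would read off $\Att(H^d_\mm(M))$ from (\ref{assh}): since $d=\dim M$, that formula gives $\Att(H^d_\mm(M))=\Assh(M)=\Ass(M)$. To handle the minimal attached primes I would invoke Proposition \ref{attdepth}. Its hypothesis is verified exactly as in the preceding corollary: any $\pp\in\Supp(M)$ contains some $\qq\in\Ass(M)$, and by unmixedness $\qq\in\Assd(M)$, so $\pp$ contains an element of $\Assd(M)$. Applying the proposition with $\depth M=d$ then yields $\min\Att(H^d_\mm(M))=\{\pp\in\Ass(M):\dim R/\pp=d\}=\Assh(M)=\Ass(M)$.

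Finally I would combine these observations: the inclusion $\min\Att(H^d_\mm(M))\subseteq\Att(H^d_\mm(M))$ holds trivially, and both sets have just been identified with $\Ass(M)=\Assd(M)$, so the entire chain of equalities follows. There is no serious obstacle here, since the statement is a direct corollary; the only point requiring care is to recognize that the two separate descriptions of attached primes, one indexed by $\dim M$ via (\ref{assh}) and the other by $\depth M$ via Proposition \ref{attdepth}, apply to the very same module precisely because $M$ is Cohen--Macaulay.
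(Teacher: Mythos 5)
Your proposal is correct and follows exactly the route the paper intends: the corollary is presented as an immediate consequence of Proposition \ref{attdepth} and the identity (\ref{assh}), with the Cohen--Macaulay hypothesis supplying $\depth M=\dim M$ and unmixedness so that $\Assd(M)=\Assh(M)=\Ass(M)$ and the hypothesis of Proposition \ref{attdepth} is verified just as in the corollary following Proposition \ref{localize}. No gaps.
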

 In the following, we have the attached prime $H^i_{\mm}(M)$ for all $i$ if $M$ is sequentially Cohen--Macaulay.
\begin{Proposition}
\label{scm}
Let  $\mathcal{F}$:
$0=M_0\subset M_1 \subset
 \dots  \subset M_d=M$ be a Cohen--Macaulay filtration of
$M$. Then
\[
\Att(H^i_{\mm}(M))=\Ass(M_i/M_{i-1}) \quad \text{for all}\quad i.
\]
Moreover,
\[
\bigcup_{i=0}^d \Att(H^i_{\mm}(M))=\Ass(M),
\]
where $d=\dim M$.
\end{Proposition}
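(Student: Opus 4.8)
The plan is to reduce each $\Att(H^i_{\mm}(M))$ to the top local cohomology of the Cohen--Macaulay quotient $M_i/M_{i-1}$, where the attached primes are already known via (\ref{assh}). The bridge is the family of isomorphisms $H^i_{\mm}(M)\iso H^i_{\mm}(M_i/M_{i-1})$, which I would establish by induction along the filtration; these are precisely the isomorphisms alluded to in Remark \ref{not}, so I would display and label them (\ref{notfg}) in the course of the proof.

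First I would record two facts about the quotients $N_i:=M_i/M_{i-1}$. Since $\mathcal{F}$ is a Cohen--Macaulay filtration, each nonzero $N_i$ is Cohen--Macaulay, and by Fact \ref{Ass} its associated primes are $\Ass(N_i)=\Ass^i(M)=\{\pp\in\Ass(M):\dim R/\pp=i\}$; in particular $N_i$ is unmixed of dimension $i$, so $\Assh(N_i)=\Ass(N_i)$, and its local cohomology is concentrated in a single degree, $H^k_{\mm}(N_i)=0$ for $k\neq i$. I would also use Grothendieck vanishing $H^k_{\mm}(M_j)=0$ for $k>\dim M_j$ together with $\dim M_j\leq j$.

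The heart of the argument is the inductive claim that $H^k_{\mm}(M_i)\iso H^k_{\mm}(N_k)$ for all $k\leq i$ (and $H^k_{\mm}(M_i)=0$ for $k>i$). For the inductive step I would feed the short exact sequence $0\to M_{i-1}\to M_i\to N_i\to 0$ into the long exact sequence of local cohomology. For $k<i$ both $H^{k-1}_{\mm}(N_i)$ and $H^k_{\mm}(N_i)$ vanish, since $N_i$ has cohomology only in degree $i$ and $k-1<i$; this forces $H^k_{\mm}(M_i)\iso H^k_{\mm}(M_{i-1})$, which equals $H^k_{\mm}(N_k)$ by the inductive hypothesis. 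For $k=i$ the neighbouring terms $H^{i-1}_{\mm}(N_i)$, $H^i_{\mm}(M_{i-1})$, and $H^{i+1}_{\mm}(M_{i-1})$ all vanish (the first because $i-1\neq i$, the latter two because $\dim M_{i-1}\leq i-1$), so $H^i_{\mm}(M_i)\iso H^i_{\mm}(N_i)$. Taking $i=d$ then yields $H^i_{\mm}(M)\iso H^i_{\mm}(M_i/M_{i-1})$ for every $i$.

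Finally I would transport attached primes across this isomorphism: $\Att(H^i_{\mm}(M))=\Att(H^i_{\mm}(N_i))=\Assh(N_i)=\Ass(N_i)=\Ass(M_i/M_{i-1})$, invoking (\ref{assh}) for the top local cohomology of the Cohen--Macaulay module $N_i$ and unmixedness for $\Assh(N_i)=\Ass(N_i)$; the degenerate cases $N_i=0$ are harmless since both sides are then empty. For the \emph{Moreover} statement I would take the union over $i$ and apply Fact \ref{Ass} once more, because $\Ass(M)$ is the disjoint union of the $\Ass^i(M)=\Ass(M_i/M_{i-1})$ for $0\leq i\leq d$. I expect the only delicate point to be the degree bookkeeping in the long exact sequence, namely checking that the correct neighbouring terms die in the regime $k<i$ as opposed to $k=i$; everything downstream of the isomorphism is a direct appeal to results already in hand.
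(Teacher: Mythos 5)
Your proposal is correct and follows essentially the same route as the paper: reduce $\Att(H^i_{\mm}(M))$ to $\Att(H^i_{\mm}(M_i/M_{i-1}))$ via the isomorphisms $H^i_{\mm}(M)\iso H^i_{\mm}(M_i/M_{i-1})$, apply the known description (\ref{assh}) of the attached primes of the top local cohomology of the Cohen--Macaulay quotients (the content of Corollary \ref{cm}), and finish with Fact \ref{Ass}. The only difference is that you prove the isomorphisms by an inductive long-exact-sequence argument, whereas the paper simply cites \cite[Lemma 4.5]{S}; your bookkeeping in the vanishing ranges $k<i$ and $k=i$ is accurate, so this is a self-contained version of the same proof.
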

\begin{proof}
For all $i$, the exact sequence $0 \to M_{i-1} \to M_i \to M_i/M_{i-1} \to 0$ yields
\begin{eqnarray}
\label{notfg}
H^i_{\mm}(M)\iso H^i_{\mm}(M_i) \iso H^i_{\mm}(M_i/M_{i-1}),
\end{eqnarray}
see \cite[Lemma 4.5]{S}. Since $M_i/M_{i-1}$ is Cohen-Macaulay, the first desired equality follows from Corollary \ref{cm}. The second equality follows from Fact \ref{Ass}.

\end{proof}

At the end of this section we consider the following fact in \cite{TN}. For a finitely generated $R$-module $M$, one has $\Ann_R (M/\pp M)=\pp$  for all $\pp \in \Var(\Ann_R M)$. The dual property of this fact for an Artinian module $L$  is :
 \[
\hspace{3cm} \Ann (0 :_L \pp) =\pp  \quad \text{for all}\quad   \pp \in  \Var(\Ann L). \hspace{3cm}(*)
\]

In \cite{TN} it is shown that the local cohomology module $H^i_{\mm}(M)$  satisfying the property $(*)$ is equivalent to say that $\Var(\Ann(H^i_{\mm}(M))) = \Psupp^i_R M$ where  $\Psupp^i_R M=\{ \pp \in \Spec(R): H^{i-\dim(R/\pp)}_{\pp R_{\pp}}(M_{\pp})\neq 0\}$, see \cite[Theorem 3.1]{TN}. It is also shown  in \cite{TN} that $H^i_{\mm}(M)$  satisfies $(*)$ with some conditions on $R$.

\begin{Proposition}
\label{Psupp1}
Assume $\pp \in \Supp(M)$ contains an element of $\Assd(M)$.  Then $H^{\depth M }_{\mm}(M)$  satisfies $(*)$.
\end{Proposition}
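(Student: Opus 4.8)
The plan is to deduce $(*)$ from the criterion of \cite[Theorem 3.1]{TN} recalled just above the statement: writing $r=\depth M$ and $L:=H^{r}_{\mm}(M)$, the module $L$ satisfies $(*)$ if and only if $\Var(\Ann L)=\Psupp^{r}_R M$. So the entire task reduces to verifying this one set equality. First I would dispose of the case $r=0$: there $\mm\in\Ass(M)$ forces $L=H^{0}_{\mm}(M)$ to have finite length, so $\Att(L)=\{\mm\}$ by Fact \ref{fl} and both sides collapse to $\{\mm\}$. For $r>0$ the module $L$ is a nonzero Artinian module (nonzero by Fact \ref{att}, since $\Assd(M)\neq\emptyset$ supplies a $\pp\in\Ass(M)$ with $\dim R/\pp=r$), and it remains to compare the two supports.

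For the left-hand side I would use that an Artinian module is secondary representable, whence $\sqrt{\Ann L}=\bigcap_{\pp\in\Att(L)}\pp$ and therefore $\Var(\Ann L)=\bigcup_{\pp\in\min\Att(L)}\Var(\pp)$. Proposition \ref{attdepth} identifies $\min\Att(L)=\Assd(M)$, giving
\[
\Var(\Ann L)=\bigcup_{\qq\in\Assd(M)}\Var(\qq).
\]

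For the right-hand side I would first compute $\Psupp^{r}_R M$ explicitly. If $\pp\in\Psupp^{r}_R M$, then $H^{r-\dim R/\pp}_{\pp R_{\pp}}(M_{\pp})\neq 0$, which forces $M_{\pp}\neq 0$, so $\pp\in\Supp(M)$, and, since local cohomology vanishes below the depth, forces $\depth M_{\pp}+\dim R/\pp\leq r$. The Depth Inequality (Fact \ref{Depth Inequality}) supplies the reverse estimate $r\leq\depth M_{\pp}+\dim R/\pp$, so in fact $\depth M_{\pp}+\dim R/\pp=\depth M$; conversely, any $\pp\in\Supp(M)$ with this equality lies in $\Psupp^{r}_R M$ because the bottom local cohomology $H^{\depth M_{\pp}}_{\pp R_{\pp}}(M_{\pp})$ of the nonzero finitely generated $R_{\pp}$-module $M_{\pp}$ is nonzero. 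Thus
\[
\Psupp^{r}_R M=\{\pp\in\Supp(M):\ \depth M_{\pp}+\dim R/\pp=\depth M\}.
\]

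Finally I would match the two descriptions. The inclusion $\bigcup_{\qq\in\Assd(M)}\Var(\qq)\subseteq\Psupp^{r}_R M$ is the easy direction: any $\pp$ containing some $\qq\in\Assd(M)$ satisfies the hypothesis of Proposition \ref{localize}(b), which yields exactly $\depth M=\depth M_{\pp}+\dim R/\pp$. The reverse inclusion is the main obstacle, and it is precisely where the running hypothesis is indispensable: given $\pp\in\Psupp^{r}_R M$ we have $\pp\in\Supp(M)$, so by assumption $\pp$ contains an element of $\Assd(M)$, placing $\pp$ in $\bigcup_{\qq\in\Assd(M)}\Var(\qq)$. Combining the two displays gives $\Var(\Ann L)=\Psupp^{r}_R M$, and \cite[Theorem 3.1]{TN} then delivers $(*)$. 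The only genuinely delicate point is the computation of $\Psupp^{r}_R M$, where one must use both the vanishing of local cohomology below the depth and the Depth Inequality to pin down the single relevant cohomological degree $r-\dim R/\pp=\depth M_{\pp}$; the rest is a formal comparison of closed subsets of $\Spec R$.
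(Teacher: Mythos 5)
Your proof is correct, and while it starts from the same reduction as the paper --- invoking \cite[Theorem 3.1]{TN} to replace $(*)$ by the set equality $\Var(\Ann(H^{r}_{\mm}(M)))=\Psupp^{r}_R M$ with $r=\depth M$ --- the way you verify that equality is genuinely different. The paper proves the two inclusions by a direct element chase: for $\pp\supseteq\Ann(H^r_{\mm}(M))$ it picks $\qq\in\Assd(M)$ with $\qq\subseteq\pp$, feeds $\qq\in\Att(H^r_{\mm}(M))$ (Fact \ref{att}) into the Shifted Localization Principle to get $\qq R_{\pp}\in\Att\big(H^{r-\dim R/\pp}_{\pp R_{\pp}}(M_{\pp})\big)$, and concludes nonvanishing; the reverse inclusion is handled symmetrically. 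You instead compute both sides in closed form: $\Var(\Ann(H^r_{\mm}(M)))=\bigcup_{\qq\in\Assd(M)}\Var(\qq)$ via the secondary-representation identity $\sqrt{\Ann L}=\bigcap_{\pp\in\Att(L)}\pp$ together with Proposition \ref{attdepth}, and $\Psupp^{r}_R M=\{\pp\in\Supp(M):\depth M_{\pp}+\dim R/\pp=r\}$ via Grothendieck vanishing/nonvanishing at the depth combined with Fact \ref{Depth Inequality}, after which Proposition \ref{localize}(b) and the running hypothesis make the two descriptions coincide. Your route is longer but buys two things the paper's proof does not make explicit: a concrete identification of $\Psupp^{\depth M}_R M$ as the set of primes where the depth inequality is an equality, and a fully written-out reverse inclusion (which the paper dismisses with ``proved in the same way''). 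It also reuses the earlier results of the section (Propositions \ref{attdepth} and \ref{localize}) rather than re-invoking the Shifted Localization Principle directly, though that principle is of course still present inside Proposition \ref{attdepth}. All steps check out, including the separate treatment of $r=0$, where both sides reduce to $\{\mm\}$.
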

\begin{proof}
 We set $\depth M=r$. By \cite[Theorem 3.1]{TN} we need to show $\Var(\Ann(H^r_{\mm}(M))) = \Psupp^r_R M$. Let $\pp \in  \Var(\Ann(H^r_{\mm}(M)))$. Thus $\Ann M \subseteq \Ann(H^r_{\mm}(M))\subseteq \pp$ and so $\pp \in \Supp(M).$ By our assumption, there exists $\qq \in \Ass(M)$ such that $\qq \subseteq \pp$ and $\dim R/\qq =r$. Fact \ref{att} provides $\qq \in \Att(H^r_{\mm}(M))$. By Shifted Localization Principle
\[
\qq R_{\pp}\in \Att_{R_{\pp}}\big(H_{\pp R_{\pp}}^{r-\dim R/\pp}(M_{\pp})\big).
\]
It follows that $H_{\pp R_{\pp}}^{r-\dim R/\pp}(M_{\pp})\neq 0$ and hence $\pp \in \Psupp^r_R M$. The other inclusion is proved in the same way.
\end{proof}
\begin{Corollary}
\label{cm1}
If $M$ is Cohen--Macaulay of dimension $d$, then $H^d_{\mm}(M)$  satisfies $(*)$.
\end{Corollary}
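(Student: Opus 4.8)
The plan is to deduce this directly from Proposition~\ref{Psupp1} by verifying that its hypothesis holds automatically for a Cohen--Macaulay module. First I would record the two basic consequences of the Cohen--Macaulay assumption. Since $M$ is Cohen--Macaulay of dimension $d$, one has $\depth M=\dim M=d$, so the local cohomology module in question is exactly $H^{\depth M}_{\mm}(M)=H^d_{\mm}(M)$; and, being Cohen--Macaulay, $M$ is unmixed, so every $\qq\in\Ass(M)$ satisfies $\dim R/\qq=d=\depth M$, that is, $\Ass(M)=\Assd(M)$.

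Next I would check the hypothesis of Proposition~\ref{Psupp1} at every prime of the support. Given $\pp\in\Supp(M)$, there is an associated prime $\qq\in\Ass(M)$ with $\qq\subseteq\pp$, since every prime containing $\Ann M$ contains a minimal, hence associated, prime. By the previous paragraph $\qq\in\Assd(M)$, so $\pp$ contains an element of $\Assd(M)$. This is precisely the condition required by Proposition~\ref{Psupp1}, which therefore applies and yields that $H^d_{\mm}(M)=H^{\depth M}_{\mm}(M)$ satisfies $(*)$.

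There is essentially no obstacle here: the substantive work is already carried out in Proposition~\ref{Psupp1}, and the Cohen--Macaulay assumption is exactly what renders its hypothesis automatic throughout $\Supp(M)$, in complete analogy with the Corollary following Proposition~\ref{localize}. The only point needing a word of care is the passage $\Ass(M)\subseteq\Assd(M)$, which rests on unmixedness; once this is noted, the associated prime lying below an arbitrary $\pp\in\Supp(M)$ already belongs to $\Assd(M)$, and the conclusion is immediate.
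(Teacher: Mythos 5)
Your proof is correct and matches the paper's intent exactly: the paper states this corollary without proof, and the implicit argument is precisely yours — Cohen--Macaulayness gives $\Ass(M)=\Assd(M)$ via unmixedness, so every $\pp\in\Supp(M)$ contains an element of $\Assd(M)$ and Proposition~\ref{Psupp1} applies (the same verification the paper writes out for the corollary to Proposition~\ref{localize}).
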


\begin{Corollary}
\label{scm1}
If $M$ is sequentially Cohen--Macaulay, then $H^i_{\mm}(M)$  satisfies $(*)$ for all $i$.
\end{Corollary}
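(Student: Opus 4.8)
The plan is to reduce the sequentially Cohen--Macaulay case to the Cohen--Macaulay case already settled in Corollary \ref{cm1}, using the local cohomology isomorphisms coming from the Cohen--Macaulay filtration. Since $M$ is sequentially Cohen--Macaulay, it admits a Cohen--Macaulay filtration $\mathcal{F}: 0 = M_0 \subset M_1 \subset \dots \subset M_d = M$, which (as noted when Cohen--Macaulay filtrations were introduced) coincides with the dimension filtration of $M$. The first step is to recall the isomorphism (\ref{notfg}) established in the proof of Proposition \ref{scm}, namely $H^i_{\mm}(M) \iso H^i_{\mm}(M_i/M_{i-1})$ for every $i$.

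Next I would observe that each nonzero quotient $M_i/M_{i-1}$ is Cohen--Macaulay of dimension exactly $i$: by Fact \ref{Ass} it is unmixed with $\Ass(M_i/M_{i-1}) = \{\pp \in \Ass(M): \dim R/\pp = i\}$, so its dimension is $i$, and it is Cohen--Macaulay because $\mathcal{F}$ is a Cohen--Macaulay filtration. Consequently $H^i_{\mm}(M_i/M_{i-1})$ is the top local cohomology of a Cohen--Macaulay module of dimension $i$, and Corollary \ref{cm1} tells us that it satisfies $(*)$. In the remaining case $M_i/M_{i-1} = 0$ we get $H^i_{\mm}(M) = 0$, so $\Ann(H^i_{\mm}(M)) = R$ and $\Var(\Ann(H^i_{\mm}(M))) = \emptyset$, whence $(*)$ holds vacuously.

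Finally I would transfer property $(*)$ across the isomorphism (\ref{notfg}). The key point is that $(*)$ is a condition formulated entirely in terms of the $R$-module structure of an Artinian module $L$ --- it asserts $\Ann(0 :_L \pp) = \pp$ for all $\pp \in \Var(\Ann L)$ --- and is therefore invariant under $R$-module isomorphism, since any $R$-isomorphism preserves both $\Ann L$ and the colon submodule $0 :_L \pp$. Hence $H^i_{\mm}(M)$, being isomorphic to $H^i_{\mm}(M_i/M_{i-1})$, inherits $(*)$ for every $i$, which is exactly the claim.

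I do not anticipate a serious obstacle: the mathematical content genuinely sits in Corollary \ref{cm1} and in the filtration isomorphism of Proposition \ref{scm}, and the present corollary is their formal combination. The only point requiring a moment of care is the stability of $(*)$ under isomorphism, which is immediate from the module-theoretic description above; alternatively, one could route the argument through the characterization $\Var(\Ann(H^i_{\mm}(M))) = \Psupp^i_R M$ of \cite[Theorem 3.1]{TN}, but the direct approach is cleaner and avoids recomputing $\Psupp$.
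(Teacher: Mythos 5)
Your proof is correct, and it takes a genuinely different route from the paper in its final step. Both arguments start from the same two ingredients: the isomorphism $H^i_{\mm}(M)\iso H^i_{\mm}(M_i/M_{i-1})$ of (\ref{notfg}) and the Cohen--Macaulay case settled in Corollary \ref{cm1}. But the paper transfers the property across that isomorphism by invoking the characterization of \cite[Theorem 3.1]{TN}, writing $\Var(\Ann(H^i_{\mm}(M)))=\Var(\Ann(H^i_{\mm}(M_i/M_{i-1})))=\Psupp^i(M_i/M_{i-1})$ and then proving the additional equality $\Psupp^i(M_i/M_{i-1})=\Psupp^i(M)$ by a two-sided argument with the Weak General Shifted Localization Principle and the Shifted Localization Principle. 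You instead observe that $(*)$ is a statement purely about the $R$-module structure of an Artinian module $L$ --- it only involves $\Ann L$ and the submodules $0:_L\pp$, both of which are preserved by any $R$-isomorphism --- so $(*)$ passes directly from $H^i_{\mm}(M_i/M_{i-1})$ to $H^i_{\mm}(M)$ with no computation of $\Psupp$ at all. Your route is shorter and logically cleaner (it also handles the degenerate case $M_i=M_{i-1}$ explicitly, which the paper leaves implicit); what the paper's longer route buys is the intermediate identity $\Psupp^i(M_i/M_{i-1})=\Psupp^i(M)$, which is of some independent interest but is not needed for the stated corollary.
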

\begin{proof}
Let  $\mathcal{F}$:
$0=M_0\subset M_1 \subset
 \dots  \subset M_d=M$ be a Cohen--Macaulay filtration of
$M$. Notice that for all $i$ we have  $H^i_{\mm}(M) \iso H^i_{\mm}(M_i/M_{i-1})$, see (\ref{notfg}). Observe that
\begin{eqnarray*}
\Var(\Ann(H^i_{\mm}(M))) & = & \Var(\Ann(H^i_{\mm}(M_i/M_{i-1})))\\
                         & =& \Psupp^i(M_i/M_{i-1})\\
                         & =& \Psupp^i(M).
\end{eqnarray*}
Therefore, the result follows by \cite[Theorem 3.1]{TN}.
Corollary \ref{cm1} provides the second step in this sequence. To see the third step, let $\pp \in \Psupp^i(M_i/M_{i-1})$. Thus $H^{i-\dim(R/\pp)}_{\pp R_{\pp}}(M_i/M_{i-1})_{\pp}\neq 0$ and so
\[
\Att\big(H^{i-\dim(R/\pp)}_{\pp R_{\pp}}(M_i/M_{i-1})_{\pp}\big)\neq \emptyset.
\]
By Weak
General Shifted Localization Principle \cite[Exercise 11.3.8]{BS}, there exists
\[
\qq R_{\pp}\in \Att_{R_{\pp}}\big(H_{\pp R_{\pp}}^{i-\dim R/\pp}(M_i/M_{i-1})_{\pp})\big),
\]
where $\qq \subseteq \pp$ and $\qq \in \Att\big(H^i_{\mm}(M_i/M_{i-1})\big)$.  Hence $\qq \in \Att(H^i_{\mm}(M))$. By using Shifted Localization Principle $\qq R_{\pp}\in \Att_{R_{\pp}}\big(H_{\pp R_{\pp}}^{i-\dim R/\pp}(M)_{\pp})\big)$. It follows that $H_{\pp R_{\pp}}^{i-\dim R/\pp}(M)_{\pp})\neq 0$ and hence $\pp \in \Psupp^i M$. The other inclusion is proved in the same way.
\end{proof}
\begin{center}
{Acknowledgment}
\end{center}
\hspace*{\parindent} The author would like to thank the referee
for the useful comments.
\bigskip

\end{document}